\newtheorem{thm}{Theorem}[section]
\newtheorem{cor}[thm]{Corollary}
\newtheorem{lem}[thm]{Lemma}
\newtheorem{prop}[thm]{Proposition}
\theoremstyle{definition}
\theoremstyle{remark}
\numberwithin{equation}{section}
\begin{document}

\title[]{An extensions of Kannappan's and Van Vleck's functional equations on semigroups}%
\author{Elqorachi Elhoucien and Redouani Ahmed}%
\address{University Ibn Zohr, Faculty of Sciences, Department of Mathematics, Agadir, Morocco}
\email{elqorachi@hotmail.com; redouani-ahmed@yahoo.fr}

\thanks{2010 Mathematics Subject Classification: 39B32, 39B52}
\keywords{semigroup, group, d'Alembert's  equation, Van Vleck's equation, Kannappan's equation, involution, multiplicative function, automorphism, morphism}%

\begin{abstract}
This paper treats two functional equations, the Kannppan-Van Vleck
functional equation $$\mu(y)f(x\tau(y)z_0)\pm f(xyz_0) =2f(x)f(y),
\;x,y\in S$$ and the following variant of it
 $$\mu(y)f(\tau(y)xz_0)\pm f(xyz_0) = 2f(x)f(y), \;x,y\in
S,$$  in the setting of semigroups $S$ that need not be abelian or
unital,   $\tau$ is an involutive morphism of $S$,  $\mu$ :
$S\longrightarrow \mathbb{C}$ is a multiplicative function such that
$\mu(x\tau(x))=1$ for all $x\in S$ and $z_0$ is a fixed element in
the center of $S$. \\We find the complex-valued solutions of these
equations in terms of multiplicative functions and  solutions of
d'Alembert's functional equation.
\end{abstract}
\maketitle
\section{Introduction} Van Vleck \cite{V1,V2} studied the continuous solutions
$f$ : $\mathbb{R} \longrightarrow \mathbb{R}$, $f\neq 0$, of the
following functional equation
\begin{equation}\label{eq1}
f(x - y + z_0)-f(x + y + z_0) = 2f(x)f(y),\;x,y \in
\mathbb{R},\end{equation} where $z_0>0$ is fixed. He showed that any
continuous solution of (\ref{eq1}) with minimal period $4z_0$ is  $f(x)=\cos(\frac{\pi}{2z_0}(x-z_0))$, $x\in \mathbb{R}$.\\
Stetk\ae r \cite[Exercise 9.18]{07} found the complex-valued
solutions of equation
\begin{equation}\label{eq2} f(xy^{-1}z_0)-f(xyz_0) =
2f(x)f(y),\;x,y \in G,\end{equation}  on groups that need not  be
abelian and $z_0$ is a fixed element in the center of $G$. \\
Perkins and Sahoo \cite{P} replaced the group inversion by an
involution $\tau$: $G\longrightarrow G$ and  obtained the abelian,
complex-valued solutions of the equation
\begin{equation}\label{eq3} f(x\tau(y)z_0)-f(xyz_0) = 2f(x)f(y),\;x,y
\in G,\end{equation} by means of d'Alembert's functional equation
\begin{equation}\label{eq4} g(xy)+g(x\tau(y)) = 2g(x)g(y),\;x,y
\in G.\end{equation}\\ Stetk\ae r \cite{St3} extended the results of
\cite{P} about equation (\ref{eq3}) to semigroups and derived an
explicit formula for the solutions in terms of multiplicative maps.
In particular, Stetk\ae r proved that all solutions of (\ref{eq3})
are abelian. So,
 the
restriction to abelian solutions in \cite{P} is not
needed.\\D'Alembert's classic functional equation
\begin{equation}\label{eq5} g(x+y)+g(x-y) = 2g(x)g(y),\;x,y
\in \mathbb{R}\end{equation} has solutions  $g$:
$\mathbb{R}\longrightarrow \mathbb{C}$ that are periodic, for
instance $g(x)=\cos(x)$, and solutions that are not, for instance
$g(x)=\cosh(x)$.\\ Kannappan \cite{K} proved that any solution of
the extension of (\ref{eq5})
\begin{equation}\label{eq6}
f(x - y + z_0)+f(x + y + z_0) = 2f(x)f(y),\;x,y \in
\mathbb{R},\end{equation} where $z_0\neq0$ is a real constant has
the form $f(x)=g(x-z_0)$, where $g$: $\mathbb{R}\longrightarrow
\mathbb{C}$ is a periodic solution of (\ref{eq5}) with period
$2z_0$.\\
Perkins and Sahoo \cite{P} considered the following version of
Kannappan's functional equation
\begin{equation}\label{eq7}
f(xyz_0)+f(xy^{-1}z_0) = 2f(x)f(y),\;x,y \in G\end{equation} on
groups and they found the form of any abelian solution $f$ of
(\ref{eq7}).\\
 Stetk\ae r \cite{stkan} took $z_0$ in the center and
expressed the complex-valued solutions of Kannappan's functional
equation \begin{equation}\label{eq88} f(xyz_0)+f(x\tau(y)z_0) =
2f(x)f(y),\;x,y \in S\end{equation} on semigroups with involution
$\tau$ in terms of solutions of d'Alembert's functional equation
(\ref{eq4}).\\In the very special case of $z_0$ being the neutral
element of a monoid $S$ equation (\ref{eq88}) becomes (\ref{eq4})
which has been solved by Davison \cite{davison}.\\Here we shall
consider the following functional equations
\begin{equation}\label{eq8}
f(xyz_0)+\mu(y)f(x\tau(y)z_0) =2f(x)f(y), \;x,y\in S,
\end{equation}
\begin{equation}\label{eq9}
f(xyz_0)+\mu(y)f(\tau(y)xz_0) =2f(x)f(y), \;x,y\in S,
\end{equation}
\begin{equation}\label{eq10}
\mu(y)f(x\tau(y)z_0)-f(xyz_0) =2f(x)f(y), \;x,y\in S
\end{equation}
and \begin{equation}\label{eq11} \mu(y)f(\tau(y)xz_0)-f(xyz_0)
=2f(x)f(y), \;x,y\in S,\end{equation} where  $S$ is a semigroup,
$\tau$ is an involutive morphism of $S$. That is, $\tau$  is an
involutive automorphism: $\tau(xy)=\tau(x)\tau(y)$ and
$\tau(\tau(x))=x$ for all $x,y\in S$ or  $\tau$ is an  involutive
anti-automorphism: $\tau(xy)=\tau(y)\tau(x)$ and $\tau(\tau(x))=x$
for all $x,y\in S.$ The map $\mu$ : $S\longrightarrow \mathbb{C}$ is
a multiplicative function such that $\mu(x\tau(x))=1$ for all $x\in
S$ and $z_0$ is a fixed element in the center of $S$. By algebraic
methods: \\(1) We find all solutions of (\ref{eq10}) and
(\ref{eq11}). Only multiplicative functions occur in the solution
formulas.\\(2) We find the solutions of (\ref{eq9}) for the
particular case of $\tau$ being an involutive automorphism and \\3)
We express the solutions of (\ref{eq8}) and (\ref{eq9}) in terms of
solutions of d'Alembert's $\mu$-functional equations
\begin{equation}\label{eq12} g(xy)+\mu(y)g(x\tau(y))
=2g(x)g(y), \;x,y\in S.\end{equation} Of course we are not the first
 to consider trigonometric functional equations having a
multiplicative function $\mu$ in front of   terms like $f(x\tau(y))$
or $f(\tau(y)x)$. The $\mu$-d'Alembert's functional equation
(\ref{eq12}) which is an extension of d'Alembert's functional
equation (\ref{eq4})   has been treated systematically by Stetk\ae r
\cite{St1, 07} on groups with involution. The non zero solutions of
(\ref{eq12}) on groups with involution are the normalized traces of
certain representation of $S$ on $\mathbb{C}^{2}$.\\
Stetk\ae r \cite{St100} obtained the complex-valued solution of the
following variant of \\d'Alembert's functional equation
\begin{equation}\label{om1}
f(xy) + f(\tau(y)x)) = 2f(x)f(y), \;x, y \in S ,\end{equation} where
$\tau$ is an involutive automorphism of $S$.
\\ Elqorachi and Redouani \cite{elqorachi} proved that the solutions
of  the variant of d'Alembert's functional equation
\begin{equation}\label{om2} f(xy) + \mu(y)f(\tau(y)x)) = 2f(x)f(y), \;x, y
\in S\end{equation}  are of the form $f(x) =
\frac{\chi(x)+\mu(x)\chi(\tau(x))}{2}$, $x\in S$, where $\tau$ is an
involutive automorphism of $S$ and  $\chi$: $S\longrightarrow S$ is
a multiplicative function.
\\Bouikhalene and Elqorachi \cite{preprint} obtained the solutions
of (\ref{eq10}) for  involutive anti-automorphism $\tau$. In the
same paper they also found  the solutions of (\ref{eq10}) for
 involutive automorphism $\tau$, but on monoids only.\\Throughout
 this paper $S$ denotes a semigroup with an involutive morphism $\tau$: $S\longrightarrow S$, $\mu$: $S\longrightarrow \mathbb{C}$  denotes a
 multiplicative function such that $\mu(x\tau(x))=1$ for all $x\in S$  and $z_0$ a fixed element in the center of $S$.
 \\In all proofs of the results of this paper we use without explicit
mentioning the assumption that $z_0$ is contained in the center of
$S$   and its consequence $\tau(z_0)$ is contained in the center of
$S$.
 \section{Solutions of equation (\ref{eq8}) on semigroups}
 In this section we express the solutions of (\ref{eq8}) in terms of
 solutions of d'Alembert's functional equation (\ref{eq12}). The
 following lemma will be used later.
\begin{lem}
If $f$: $S\longrightarrow \mathbb{C}$ is a solution of (\ref{eq8}),
then  for all $x\in S$
\begin{equation}\label{eqo77}
    f(x)=\mu(x)f(\tau(x)),
\end{equation}
\begin{equation}\label{eqo999}
    f(x\tau(z_0)z_0) =\mu(\tau(z_0))f(z_0)f(x),
\end{equation}
\begin{equation}\label{eqo1000}
    f(xz_{0}^{2})=f(x)f(z_0),
\end{equation}
\begin{equation}\label{eqo88}
    f(z_0)\neq 0\Longleftrightarrow f\neq 0.
\end{equation}
  \end{lem}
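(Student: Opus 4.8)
The plan is to work only from the defining identity (\ref{eq8}), using throughout that $z_0$ (hence $\tau(z_0)$) is central and that $\mu$ is multiplicative with $\mu(x\tau(x))=1$; in particular $\mu(y)\mu(\tau(y))=\mu(y\tau(y))=1$ and $\mu(z_0)\mu(\tau(z_0))=1$, so $\mu(\tau(y))=\mu(y)^{-1}\neq0$. I would first establish (\ref{eqo77}). Replacing $y$ by $\tau(y)$ in (\ref{eq8}) and using $\tau(\tau(y))=y$ gives $f(x\tau(y)z_0)+\mu(\tau(y))f(xyz_0)=2f(x)f(\tau(y))$; multiplying this by $\mu(y)$ and using $\mu(y)\mu(\tau(y))=1$ turns its left-hand side into exactly the left-hand side of (\ref{eq8}). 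Comparing the two right-hand sides yields $f(x)\big[f(y)-\mu(y)f(\tau(y))\big]=0$ for all $x,y\in S$. If $f\equiv0$ then (\ref{eqo77}) is trivial; otherwise, choosing $x$ with $f(x)\neq0$ gives $f(y)=\mu(y)f(\tau(y))$ for every $y$. From here on (\ref{eqo77}) may be used freely.

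The main obstacle is that the obvious specializations do not suffice. Putting $y=z_0$ in (\ref{eq8}) gives $f(xz_0^2)+\mu(z_0)f(x\tau(z_0)z_0)=2f(x)f(z_0)$, and putting $y=\tau(z_0)$ (together with (\ref{eqo77})) gives $f(x\tau(z_0)z_0)+\mu(\tau(z_0))f(xz_0^2)=2\mu(\tau(z_0))f(x)f(z_0)$. These two linear relations in the unknowns $f(xz_0^2)$ and $f(x\tau(z_0)z_0)$ have determinant $1-\mu(z_0)\mu(\tau(z_0))=0$, so they are dependent and cannot separately determine the two values. An independent relation is therefore needed, and producing it is the heart of the proof.

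To obtain it I would interchange $x$ and $y$ in (\ref{eq8}); since the right-hand side is symmetric this gives $f(xyz_0)+\mu(y)f(x\tau(y)z_0)=f(yxz_0)+\mu(x)f(y\tau(x)z_0)$, and setting $y=z_0$ and moving the central $z_0$ through the arguments yields the symmetry relation $\mu(z_0)f(x\tau(z_0)z_0)=\mu(x)f(\tau(x)z_0^2)$ for all $x$. On the other hand, applying (\ref{eqo77}) at the point $x\tau(z_0)z_0$, and noting $\mu(x\tau(z_0)z_0)=\mu(x)$ and $\tau(x\tau(z_0)z_0)=\tau(x)\tau(z_0)z_0$ (valid whether $\tau$ is an automorphism or an anti-automorphism, using centrality), gives $f(\tau(x)\tau(z_0)z_0)=\mu(\tau(x))f(x\tau(z_0)z_0)$. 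Inserting this into the symmetry relation taken at $\tau(x)$ and cancelling the nonzero factor $\mu(\tau(x))$ produces the extra identity
\[ f(xz_0^2)=\mu(z_0)\,f(x\tau(z_0)z_0),\qquad x\in S. \]
Substituting this back into the $y=z_0$ relation collapses its two terms into $2\mu(z_0)f(x\tau(z_0)z_0)=2f(x)f(z_0)$, i.e.\ $\mu(z_0)f(x\tau(z_0)z_0)=f(x)f(z_0)$, which rewrites as (\ref{eqo999}); and then $f(xz_0^2)=\mu(z_0)f(x\tau(z_0)z_0)=f(x)f(z_0)$, which is (\ref{eqo1000}).

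Finally, (\ref{eqo88}) is immediate in one direction, since $f(z_0)\neq0$ forces $f\not\equiv0$, so I would prove the contrapositive of the converse: assume $f(z_0)=0$. Then (\ref{eqo1000}) gives $f\equiv0$ on $Sz_0^2$. Replacing $x$ by $xz_0$ in (\ref{eq8}) and using centrality gives $f(xyz_0^2)+\mu(y)f(x\tau(y)z_0^2)=2f(xz_0)f(y)$, whose left-hand side now vanishes, so $f(xz_0)f(y)=0$ for all $x,y$; were $f$ not identically zero this would force $f\equiv0$ on $Sz_0$, and then the left-hand side of (\ref{eq8}) itself vanishes, giving $f(x)f(y)=0$ for all $x,y$, a contradiction. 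Hence $f(z_0)=0$ implies $f\equiv0$, which completes (\ref{eqo88}). The only genuinely delicate point in the whole argument is the degeneracy noted in the second paragraph; once the identity $f(xz_0^2)=\mu(z_0)f(x\tau(z_0)z_0)$ is secured, everything else is routine bookkeeping with the centrality of $z_0$ and the multiplicativity of $\mu$.
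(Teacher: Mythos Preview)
Your argument is correct, and for (\ref{eqo77}) and (\ref{eqo88}) it matches the paper's proof essentially line for line. The genuine difference is in how you reach (\ref{eqo999}) and (\ref{eqo1000}). You view the specializations $y=z_0$ and $y=\tau(z_0)$ as a degenerate $2\times 2$ system and then manufacture an independent relation by exploiting the $x\leftrightarrow y$ symmetry of the right-hand side together with (\ref{eqo77}), obtaining the auxiliary identity $f(xz_0^2)=\mu(z_0)f(x\tau(z_0)z_0)$. The paper never meets that degeneracy: instead of varying $y$, it sets $x=\tau(z_0)$ in (\ref{eq8}) and applies (\ref{eqo77}) to the second term on the left, which shows that the two summands $f(\tau(z_0)yz_0)$ and $\mu(y)f(\tau(z_0)\tau(y)z_0)$ are equal; hence $2f(\tau(z_0)yz_0)=2\mu(\tau(z_0))f(z_0)f(y)$, which is (\ref{eqo999}) directly, and (\ref{eqo1000}) then follows from the $y=z_0$ relation. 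So the paper's route is shorter---one substitution plus one use of (\ref{eqo77})---while your route, though a little longer, has the merit of making explicit why naive substitutions in $y$ alone cannot separate the two quantities.
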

  \begin{proof}Equation (2.1):  By replacing $y$ by $\tau(y)$ in (\ref{eq8}) and multiplying the
   result obtained by $\mu(y)$ and using $\mu(y\tau(y))=1$ we get by
  computation that $$\mu(y)2f(x)f(\tau(y))=\mu(y)f(x\tau(y)z_0)+\mu(y\tau(y))f(xyz_0)$$
  $$=\mu(y)f(x\tau(y)z_0)+f(xyz_0)=2f(x)f(y),$$ which implies
  (2.1).\\ Equation (2.2): Replacing $x$ by  $\tau(z_0)$  in (\ref{eq8}) and  using
  (2.1) two times we get by a computations that
  $$f(\tau(z_0)yz_0)+\mu(y)f(\tau(z_0)\tau(y)z_0)=2f(\tau(z_0))f(y)=2\mu(\tau(z_0))f(z_0)f(y)$$and
  $$f(\tau(z_0)yz_0)+\mu(y)f(\tau(z_0)\tau(y)z_0)=2f(\tau(z_0))f(y)$$$$=f(\tau(z_0)yz_0)+\mu(y)\mu(\tau(z_0)\tau(y)z_0)f(\tau(z_0)yz_0)=2f(\tau(z_0)yz_0).$$ This proves (2.2).
  \\  Equation (2.3): Putting $y = z_0$ in (\ref{eq8}) and using
(2.2) we obtain (2.3).\\ Equation (2.4): Assume that $f(z_0) = 0$.
By replacing $x$ by $xz_0$ and $y$ by $yz_0$ in (\ref{eq8}) and
using (2.2) and (2.3) we get by a computation that
$$2f(xz_0)f(yz_0)=f(xz_0yz_{0}^{2})+\mu(yz_0)f(xz_0\tau(y)\tau(z_0)z_0)$$
$$=f(z_0)f(xyz_0)+\mu(y)f(z_0)f(x\tau(y)z_0)=0\;\text{for
all }\;x,y\in S.$$ Which implies that $f(xz_0)=0$ for all $x\in S$.
So, from equation (\ref{eq8}) we get $2f(x)f(y)=0$ for all $x,y\in
S,$ and then $f(x)=0$ for all $x\in S$. Conversely, it's clear that
$f(x)=0$ for all $x\in S$ implies that $f(z_0)=0$.\end{proof}
For the rest of this section we use the following notations \cite{stkan}.\\
- $\mathcal{A}$ consists of the solutions of $g:$ $S\longrightarrow
\mathbb{C}$ of d'Alembert's functional equation (\ref{eq12}) with $
g(z_0) \neq 0$ and satisfying the condition
\begin{equation}\label{pr2}
 g(xz_0)=g(z_0)g(x)\; \text{for all}\;x\in S.\end{equation}
\\- To any $g\in \mathcal{A}$ we associate the function
$Tg=g(z_0)g:$ $S\longrightarrow \mathbb{C}$.\\
- $\mathcal{K}$ consists of the non-zero solutions $f:$
$S\longrightarrow \mathbb{C}$ of  Kannappan's functional equation
(\ref{eq8}).
\\In the following main result of the present section, the complex
solutions of equation (\ref{eq8}) are expressed by means of
solutions of d'Alembert's functional equation (\ref{eq12}).
\begin{thm} (1)
$T$ is a bijection of $\mathcal{A}$ onto $\mathcal{K}$. The inverse
$T^{-1}$: $\mathcal{K}\longrightarrow \mathcal{A}$ is given by the
formula
$$(T^{-1}f)(x)=\frac{f(xz_0)}{f(z_0)}$$ for
all $f\in \mathcal{K}$ and $x\in S. $\\{(2)} Any non-zero solution
$f$: $S\longrightarrow \mathbb{C}$ of the  Kannappan's functional
equation (\ref{eq8}) is of the form $f=T(g)=g(z_0)g$, where $g\in
\mathcal{A}$. Furthermore,
$$f(x)=g(xz_0)=\mu(z_0)g(x\tau(z_0))=g(z_0)g(x)$$ for all $x\in
S.$\\(3) $f$ is central i.e. $f(xy)=f(yx)$ for all $x,y\in S$ if and
only if $g$ is central.\\(4) $f$ is abelian [\cite{07}, Definition
B.3.] if and only if $g$ is abelian.\\(5) If $S$ is equipped with a
topology then $f$ is continuous if and only if $g$ is continuous.
\end{thm}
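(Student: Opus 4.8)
The plan is to establish the correspondence between $\mathcal{A}$ and $\mathcal{K}$ via the map $T$ by checking, in turn, that $T$ lands in $\mathcal{K}$, that the proposed $T^{-1}$ lands in $\mathcal{A}$, and that the two maps are mutually inverse. First I would take $g\in\mathcal{A}$ and set $f=Tg=g(z_0)g$, and verify directly that $f$ solves \eqref{eq8}. Substituting $f=g(z_0)g$ into \eqref{eq8} reduces, after dividing by the nonzero constant $g(z_0)$, to the d'Alembert equation \eqref{eq12} for $g$, but with the $z_0$ shift absorbed: the key computational tool here is the defining property \eqref{pr2}, namely $g(xz_0)=g(z_0)g(x)$, together with the fact that $z_0$ (and hence $\tau(z_0)$) is central. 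Concretely, $f(xyz_0)+\mu(y)f(x\tau(y)z_0)=g(z_0)[g(xyz_0)+\mu(y)g(x\tau(y)z_0)]$, and one pushes the extra $z_0$ out of each argument using \eqref{pr2} to match $2g(z_0)^2 g(x)g(y)=2f(x)f(y)$. Since $g(z_0)\neq 0$, $f=g(z_0)g$ is a nonzero solution, so $f\in\mathcal{K}$.

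Next I would verify that $T^{-1}$ as defined sends $\mathcal{K}$ into $\mathcal{A}$. Given $f\in\mathcal{K}$, set $g(x)=f(xz_0)/f(z_0)$, which is legitimate since $f\neq 0$ forces $f(z_0)\neq 0$ by \eqref{eqo88}. I would check that this $g$ satisfies \eqref{eq12}, that $g(z_0)\neq 0$, and that it obeys the side condition \eqref{pr2}. The computation that $g$ solves \eqref{eq12} will again exploit centrality of $z_0$ to slide the shift through, and the relation \eqref{eqo1000}, $f(xz_0^2)=f(x)f(z_0)$, will be exactly what converts the Kannappan-type equation \eqref{eq8} for $f$ into the clean d'Alembert equation \eqref{eq12} for $g$. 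For the side condition \eqref{pr2} for $g$, I would compute $g(xz_0)=f(xz_0^2)/f(z_0)=f(x)f(z_0)/f(z_0)=f(x)$ via \eqref{eqo1000} and compare with $g(z_0)g(x)$; checking $g(z_0)\neq 0$ amounts to \eqref{eqo88} applied appropriately. Once $g\in\mathcal{A}$, I would confirm $TT^{-1}=\mathrm{id}$ and $T^{-1}T=\mathrm{id}$ by direct substitution, each again reducing to \eqref{pr2} and \eqref{eqo1000}; this settles part (1), and part (2) then follows since the chain of equalities $f(x)=g(xz_0)=g(z_0)g(x)$ is immediate from \eqref{pr2}, while $g(xz_0)=\mu(z_0)g(x\tau(z_0))$ follows from \eqref{eqo999} transported through $T^{-1}$.

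For parts (3), (4) and (5), the strategy is that each is a property preserved by the explicit correspondence $f=g(z_0)g$ and $g(x)=f(xz_0)/f(z_0)$, both of which are, up to a nonzero multiplicative constant, given by right translation by $z_0$ (a central element) and restriction. For centrality in (3), I would observe that $f(xy)=f(yx)$ for all $x,y$ is equivalent to $g(xy)=g(yx)$ because $f$ and $g$ differ by the translation $x\mapsto xz_0$ with $z_0$ central, so $f(xy)=g(z_0)g(xyz_0)=g(z_0)g(xy\,z_0)$ and $z_0$ central lets the comparison pass cleanly between $f$ and $g$; the same centrality argument handles the reverse implication. Part (4) is analogous but must be phrased through the definition of abelian from \cite[Definition B.3]{07}, and part (5) follows because right translation by the fixed element $z_0$ and multiplication by the constant $g(z_0)$ are continuous operations with continuous inverses, so continuity transfers in both directions.

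The main obstacle I anticipate is the bookkeeping in the substitution verifying that $T^{-1}f$ genuinely satisfies \eqref{eq12}: one must correctly commute the two appearances of $z_0$ (one from the definition of $g$, one sitting inside the arguments of \eqref{eq8}) past $\tau$ and past the semigroup multiplication, using that $\tau$ is a morphism, that $\mu$ is multiplicative with $\mu(x\tau(x))=1$, and that $z_0$ is central. The noncommutativity of $S$ means I cannot freely rearrange factors, so the argument must rely precisely on the centrality of $z_0$ and $\tau(z_0)$ at each step, and on \eqref{eqo1000} to collapse the resulting $z_0^2$ factors; keeping the $\mu$-weights aligned with the correct arguments is where a sign or factor could easily go astray.
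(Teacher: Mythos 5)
Your overall route coincides with the paper's: show $T(\mathcal{A})\subseteq\mathcal{K}$ using \eqref{pr2}, define $g(x)=f(xz_0)/f(z_0)$ for $f\in\mathcal{K}$ (legitimate by \eqref{eqo88}), verify that $g$ solves \eqref{eq12} via \eqref{eqo999}, \eqref{eqo1000} and centrality of $z_0$, and then read off parts (2)--(5) from the correspondence. Your way of getting injectivity (checking $T^{-1}T=\mathrm{id}$ directly from \eqref{pr2}) is fine and even a little more self-contained than the paper's appeal to an external lemma. However, there is one genuine gap, and it sits exactly at the one place where the paper has to do real work: the verification that $g=T^{-1}f$ satisfies the normalization \eqref{pr2}. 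You correctly compute $g(xz_0)=f(xz_0^2)/f(z_0)=f(x)$ from \eqref{eqo1000}, but then merely say you would ``compare with $g(z_0)g(x)$.'' That comparison is the whole point: $g(z_0)g(x)=f(z_0^2)f(xz_0)/f(z_0)^2$, and in a semigroup without identity Lemma 2.1 gives no way to evaluate $f(z_0^2)$ or $f(xz_0)$ separately (note \eqref{eqo1000} only controls $f(xz_0^2)$ for $x\in S$, and $z_0^2$ is not of that form unless $S$ is unital). So the required identity $f(z_0^2)f(xz_0)=f(x)f(z_0)^2$ does not follow from anything you have written down. The paper closes this by the auxiliary substitutions \eqref{red1} and \eqref{red2} (replace $(x,y)$ by $(xz_0^2,z_0)$ and by $(xz_0,z_0^2)$ in \eqref{eq8}), which yield $f(z_0^2)f(xz_0)=f(xz_0^2)f(z_0)$ and hence \eqref{pr2}. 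Relatedly, your claim that $g(z_0)\neq0$ ``amounts to \eqref{eqo88}'' is not quite right either: one needs $f(z_0^2)\neq0$, which the paper deduces only after \eqref{pr2} is in hand, via $g(z_0)^2=g(z_0^2)=f(z_0^3)/f(z_0)=f(z_0)\neq0$.

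The gap is repairable, and in fact more cheaply than in the paper: once you have shown that $g$ satisfies \eqref{eq12}, put $y=z_0$ there to get $g(xz_0)+\mu(z_0)g(x\tau(z_0))=2g(x)g(z_0)$; by \eqref{eqo1000} the first term is $f(x)$, and by \eqref{eqo999} the second is $\mu(z_0)\mu(\tau(z_0))f(x)=f(x)$, whence $g(xz_0)=f(x)=g(x)g(z_0)$. Either way, you must supply some such argument explicitly; as written, the step asserting \eqref{pr2} for $T^{-1}f$ is unsupported.
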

\begin{proof}For any $g\in \mathcal{A}$ and for all $x,y\in S$ we have $$T(g)(xyz_0)+\mu(y)T(g)(x\tau(y)z_0)=g(z_0)[g(xyz_0)+\mu(y)g(x\tau(y)z_0)]$$
$$=g(z_0)^{2}[g(xy)+\mu(y)g(x\tau(y))]=2g(z_0)g(x)g(z_0)g(y)=2T(g)(x)T(g)(y).$$  On the other hand $T(g)(z_0)=g(z_0)^{2}\neq0$, so we get
  $T(\mathcal{A})\subseteq \mathcal{K}.$\\
By adapting the proof of  [\cite{stkan}, Lemma 3]  $T$ is injective.
Now, we will show that $T$ is surjective. Let $f\in \mathcal{K}$.
Then from (2.4) we have $f(z_0)\neq0$ and we can define the function
$g(x)=\frac{f(xz_0)}{f(z_0)}.$ In the following we will show that
$g\in \mathcal{A}$ and $T(g)=f$. By using the definition of $g$ and
(2.2)-(2.3)  we have
$$f(z_0)^{2}[g(xy)+\mu(y)g(x\tau(y))]=f(z_0)f(xyz_0)+\mu(y)f(z_0)f(x\tau(y)z_0)$$
$$=f(xyz_{0}^{3})+\mu(y)\mu(z_0)f(x\tau(y)z{_0}^{2}\tau(z_0))$$
$$=f(xz_0yz_0z_0)+\mu(yz_0)f(xz_0\tau(yz_0)z_0)=2f(xz_0)f(yz_0)=2f(z_0)^{2}g(x)g(y)$$
for all $x,y\in S.$ This shows that $g$ is a solution of
d'Alembert's functional equation (\ref{eq12}).
 \\By replacing $x$ by $xz_{0}^{2}$ and $y$ by $z_0$ in
(\ref{eq8})  we get
\begin{equation}\label{red1}
f(xz_{0}^{4})+\mu(z_0)f(xz_{0}^{3}\tau(z_0))=2f(xz_{0}^{2})f(z_0).\end{equation}
By replacing $x$  by $xz_0$ and $y$ by $z_{0}^{2}$ in (\ref{eq8}) we
have
\begin{equation}\label{red2}
f(xz_{0}^{4})+\mu(z_{0}^{2})f(xz_{0}^{2}\tau(z_{0}^{2}))=2f(z_{0}^{2})f(xz_0).\end{equation}
From (2.2) and (2.3) we have
$$f(xz_{0}^{3}\tau(z_0)) =\mu(\tau(z_0))f(x)(f(z_0))^{2}$$ and
$$f(xz_{0}^{2}\tau(z_{0}^{2}) ) =(\mu(\tau(z_0)))^{2}f(x)(f(z_0))^{2}.$$ In view of (\ref{red1})
and (\ref{red2}) we deduce that $f(z_{0}^{2}) f(xz_0) =
f(xz_{0}^{2})f(z_0).$ So, by using the definition of $g$ we obtain $
g(xz_0) =g(x)g(z_0)$ for all $x\in S.$ In particular,
$g(z_{0}^{2})=g(z_0)^{2}=\frac{f(z_{0}^{2}z_0)}{f(z_0)}=\frac{f(z_{0})f(z_0)}{f(z_0)}=f(z_0)\neq0.$
Furthermore,
$T(g)(x)=g(z_0)g(x)=g(xz_0)=\frac{f(xz_{0}^{2})}{f(z_0)}=\frac{f(x)f(z_{0})}{f(z_0)}=f(x)$.\\
The  statements (2), (3), (4) and (5) are obvious. This completes
the proof.
\end{proof}
Now, we extend Stetk\ae r's result \cite{stkan} from
anti-automorphisms to the more general case of morphism as follows.
\begin{cor}Let $z_0$ be a fixed element in the center of a semigroup $S$ and let  $\tau$ be an involutive morphism of $S$. Then, any non-zero solution
$f$: $S\longrightarrow \mathbb{C}$ of  the functional equation (1.8)
is of the form $f=g(z_0)g$, where $g$ is a
 solution of d'Alembert's functional equation (1.4)
with $g(z_0)\neq 0$ and satisfying the condition
$g(xz_0)=g(z_0)g(x)$ for all $x\in S$.\end{cor}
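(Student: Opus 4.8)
The plan is to obtain this Corollary as the special case of the preceding theorem in which the multiplicative function $\mu$ is taken to be the constant function $1$. The point worth stressing at the outset is that the theorem was established for an \emph{arbitrary} involutive morphism $\tau$, so once $\mu$ is specialized the conclusion will cover both the automorphism and the anti-automorphism case; this is exactly the extension of Stetk\ae r's result \cite{stkan} from anti-automorphisms to morphisms that is being claimed.

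First I would check that $\mu \equiv 1$ is an admissible choice of multiplicative function: it clearly satisfies $\mu(xy) = \mu(x)\mu(y)$ and $\mu(x\tau(x)) = 1$ for all $x \in S$, so it meets the standing hypotheses imposed on $\mu$ throughout the paper, while the remaining hypotheses of the Corollary ($z_0$ central, $\tau$ an involutive morphism) are identical to those of the theorem. I would then record the two reductions this produces: equation \eqref{eq8} becomes $f(xyz_0) + f(x\tau(y)z_0) = 2f(x)f(y)$, which is precisely equation \eqref{eq88}, and the $\mu$-d'Alembert equation \eqref{eq12} becomes $g(xy) + g(x\tau(y)) = 2g(x)g(y)$, which is precisely the classical d'Alembert equation \eqref{eq4}. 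Under these identifications the class $\mathcal{A}$ is the set of solutions $g$ of \eqref{eq4} with $g(z_0) \neq 0$ and $g(xz_0) = g(z_0)g(x)$, and $\mathcal{K}$ is the set of non-zero solutions of \eqref{eq88}.

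Finally I would invoke part (2) of the theorem, which asserts that every non-zero solution $f$ of the Kannappan equation equals $g(z_0)g$ for some $g \in \mathcal{A}$. Read through the dictionary just set up, this says exactly that every non-zero solution of \eqref{eq88} has the form $f = g(z_0)g$, where $g$ solves \eqref{eq4}, satisfies $g(z_0) \neq 0$, and obeys $g(xz_0) = g(z_0)g(x)$ --- which is the assertion of the Corollary. I do not expect any genuine obstacle: the entire content is the (trivial) verification that the constant function $1$ is a legitimate value of $\mu$, and the generalization beyond \cite{stkan} comes for free because the theorem was already proved for morphisms rather than merely anti-automorphisms.
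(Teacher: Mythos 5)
Your proposal is correct and is precisely the paper's (implicit) argument: the Corollary is stated immediately after Theorem 2.2 with no separate proof, the intended justification being exactly the specialization $\mu\equiv 1$, under which equation (1.9) reduces to (1.8) and the $\mu$-d'Alembert equation (1.13) reduces to (1.4). Your check that the constant function $1$ satisfies the standing hypotheses on $\mu$ is the only verification needed, and your reading of Theorem 2.2(2) through that dictionary matches the Corollary's conclusion.
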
We will in the
following propositions determine all abelian (resp. central)
solutions $f$ of  Kannappan's functional equation (1.9).
\begin{prop} Let $z_0$ be a fixed element in the center of a semigroup $S$. Let  $\tau$: $S\longrightarrow S$ be an involutive anti-automorphism of $S$
 and let $\mu$: $S\longrightarrow \mathbb{C}$  be a multiplicative function such that $\mu(x\tau(x))=1$ for all $x\in S.$  The non-zero abelian solutions of
  Kannappan's functional equation (1.9) are the functions of the
form
$$f(x)=\frac{\chi(x)+\mu(x)\chi(\tau(x))}{2}\chi(z_0),\; x\in
S,$$ where $\chi:$ $S\longrightarrow \mathbb{C}$ is a multiplicative
function such that $\chi(z_0)\neq 0$ and
$\mu(z_0)\chi(\tau(z_0))=\chi(z_0)$.
\end{prop}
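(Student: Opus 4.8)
The plan is to deduce the Proposition from Theorem~2.2 by exploiting abelianity. The first step is the observation that, for an \emph{abelian} $f$, the two Kannappan equations \eqref{eq8} and \eqref{eq9} coincide: abelianity gives $f(\tau(y)xz_0)=f(x\tau(y)z_0)$ for all $x,y\in S$ (the three factors $\tau(y),x,z_0$ may be permuted), so a non-zero function $f$ is an abelian solution of \eqref{eq9} exactly when it is an abelian solution of \eqref{eq8}. Hence $f\in\mathcal{K}$, and Theorem~2.2 applies: every such $f$ equals $T(g)=g(z_0)g$ for a unique $g\in\mathcal{A}$, and by part (4) of that theorem $f$ is abelian if and only if $g$ is abelian. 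Thus the task reduces to describing the abelian members of $\mathcal{A}$, that is, the abelian non-zero solutions $g$ of the $\mu$-d'Alembert equation \eqref{eq12} that in addition satisfy $g(z_0)\neq0$ and $g(xz_0)=g(z_0)g(x)$.

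The heart of the argument is to show that an abelian non-zero solution $g$ of \eqref{eq12} has the form $g=\tfrac12\big(\chi+\mu\,(\chi\circ\tau)\big)$ for some multiplicative $\chi\colon S\to\mathbb{C}$. This is the classical description of the abelian solutions of d'Alembert's $\mu$-equation (see Stetk\ae r \cite{St1,07}; compare the closely related variant \eqref{om2} of Elqorachi and Redouani \cite{elqorachi}), and I expect this structural identification of the multiplicative datum $\chi$ from $g$ to be the main obstacle, being the only genuinely nonelementary input. The anti-automorphism hypothesis causes no trouble here: since $\mathbb{C}$ is commutative, $\chi\circ\tau$ is multiplicative whenever $\chi$ is, so the proposed form is well defined and, as a direct substitution confirms, does solve \eqref{eq12}.

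It then remains to feed the two defining constraints of $\mathcal{A}$ back into this form and read off $f$. Writing $g=\tfrac12(\chi+\mu\,(\chi\circ\tau))$ and using $\chi(xz_0)=\chi(x)\chi(z_0)$, $\tau(xz_0)=\tau(x)\tau(z_0)$ (centrality of $z_0$ and $\tau(z_0)$), $\mu(xz_0)=\mu(x)\mu(z_0)$, and $\mu(z_0)\mu(\tau(z_0))=1$, a short computation reduces the requirement $g(xz_0)=g(z_0)g(x)$ to the identity $(a-b)(u(x)-v(x))=0$ for all $x$, where $a=\chi(z_0)$, $b=\mu(z_0)\chi(\tau(z_0))$, $u=\chi$, and $v=\mu\,(\chi\circ\tau)$. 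Evaluating at $x=z_0$ gives $(a-b)^2=0$, hence $a=b$, i.e. $\mu(z_0)\chi(\tau(z_0))=\chi(z_0)$; with this, $g(z_0)=\tfrac12(a+b)=\chi(z_0)$, so the condition $g(z_0)\neq0$ becomes $\chi(z_0)\neq0$. Consequently $f=g(z_0)g=\chi(z_0)\cdot\tfrac12\big(\chi+\mu\,(\chi\circ\tau)\big)$, which is exactly the asserted formula.

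Finally I would verify the converse to close the equivalence: given any multiplicative $\chi$ with $\chi(z_0)\neq0$ and $\mu(z_0)\chi(\tau(z_0))=\chi(z_0)$, a direct substitution (using $\tau(x\tau(y))=y\tau(x)$ and $\mu(y)\mu(\tau(y))=1$) shows that $g=\tfrac12(\chi+\mu\,(\chi\circ\tau))$ solves \eqref{eq12}; moreover $g(z_0)=\chi(z_0)\neq0$ and $g(xz_0)=g(z_0)g(x)$, so $g\in\mathcal{A}$, and $g$ is abelian because $\chi$ and $\chi\circ\tau$ are multiplicative. Hence $f=T(g)$ is a non-zero abelian solution of \eqref{eq8}, equivalently of \eqref{eq9}, of the required form, completing the description.
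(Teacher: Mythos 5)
Your proof is correct and follows essentially the same route as the paper: reduce via Theorem 2.2(2) and (4) to an abelian $g\in\mathcal{A}$, invoke the known form $g=\tfrac{1}{2}\left(\chi+\mu\,\chi\circ\tau\right)$ of abelian non-zero solutions of the $\mu$-d'Alembert equation, and extract $\mu(z_0)\chi(\tau(z_0))=\chi(z_0)$ from condition (2.5) evaluated at $x=z_0$ (the paper phrases this as $g(z_0^2)=g(z_0)^2$, which is the same computation). Your opening detour identifying the two Kannappan equations for abelian $f$ is harmless but unnecessary, since the proposition's equation (1.9) is already the one governed by Theorem 2.2.
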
\begin{proof}Verifying that the function $f$ defined in Proposition 2.4
is an abelian solution of (1.9) consists of simple computations that we omit.\\
Let $f$: $S\longrightarrow \mathbb{C}$ be a non-zero solution of
(1.9). From Theorem 2.2(2) and (4) the function $f$ has the form
$f=g(z_0)g$ where $g\in \mathcal{A}$ and $g$ is abelian. From
[\cite{belfkih}, Theorem 2.1], [12, Proposition 9.31] there exists a
non-zero multiplicative function $\chi$: $S\longrightarrow
\mathbb{C}$ such that $g=\frac{\chi+\mu\chi\circ\tau}{2}$. Since
$g\in \mathcal{A}$, it satisfies (2.5).  If we replace $x$ by $z_0$
in (2.5) we get $g(z_{0}^{2})=g(z_0)^{2}$, which via computation
gives that  $\chi(z_0)=\mu(z_0)\chi(\tau(z_0))$. This implies that
$f$ has the desired form. This completes the proof.
\end{proof}By using [\cite{elqorachi}, Lemma 3.2] and the proof of the
preceding proposition we get
\begin{prop} Let $z_0$ be a fixed element in the center of a semigroup $S$. Let  $\tau$: $S\longrightarrow S$ be an involutive automorphism of $S$
 and let $\mu$: $S\longrightarrow \mathbb{C}$  be a multiplicative function such that $\mu(x\tau(x))=1$ for all $x\in S.$  The non-zero central solutions of
the  Kannappan's functional equation (1.9) are the functions of the
form $$f(x)=\frac{\chi(x)+\mu(x)\chi(\tau(x))}{2}\chi(z_0),\; x\in
S,$$ where $\chi:$ $S\longrightarrow \mathbb{C}$ is a multiplicative
function such that $\chi(z_0)\neq 0$ and
$\mu(z_0)\chi(\tau(z_0))=\chi(z_0)$.
\end{prop}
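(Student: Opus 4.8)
The plan is to run the argument of Proposition 2.4 essentially verbatim, swapping the structural input that controls abelian $\mu$-d'Alembert solutions for anti-automorphisms for the one that controls \emph{central} $\mu$-d'Alembert solutions for automorphisms, namely [\cite{elqorachi}, Lemma 3.2]. The forward implication, that every $f$ of the displayed form is a non-zero central solution of (\ref{eq8}), is a direct substitution using multiplicativity of $\chi$ and $\mu$, the morphism property of $\tau$, and the relation $\mu(z_0)\chi(\tau(z_0))=\chi(z_0)$; I would record that this is routine and omit it. The substance is the converse.

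First I would take an arbitrary non-zero central solution $f$ of (\ref{eq8}) and feed it into Theorem 2.2. By part (2), $f=T(g)=g(z_0)g$ for a (unique) $g\in\mathcal{A}$, and by part (3), since $f$ is central so is $g$. Hence $g$ is a \emph{central} solution of the $\mu$-d'Alembert equation (\ref{eq12}) with $\tau$ an involutive automorphism, and as an element of $\mathcal{A}$ it satisfies $g(z_0)\neq 0$ together with the normalisation condition (\ref{pr2}).

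Next I would invoke [\cite{elqorachi}, Lemma 3.2]: a central solution $g$ of (\ref{eq12}) for an involutive automorphism $\tau$ has the form $g=\dfrac{\chi+\mu\,(\chi\circ\tau)}{2}$ for some non-zero multiplicative $\chi\colon S\longrightarrow\mathbb{C}$. It then remains only to extract the normalisation. Setting $x=z_0$ in (\ref{pr2}) gives $g(z_0^{2})=g(z_0)^{2}$; inserting the formula for $g$ and using $\tau(z_0^{2})=\tau(z_0)^{2}$ together with multiplicativity collapses this to $\bigl[\chi(z_0)-\mu(z_0)\chi(\tau(z_0))\bigr]^{2}=0$, whence $\mu(z_0)\chi(\tau(z_0))=\chi(z_0)$. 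Feeding this back yields $g(z_0)=\chi(z_0)$, which is non-zero since $g(z_0)\neq 0$, and therefore $f=g(z_0)g=\chi(z_0)\,\dfrac{\chi+\mu\,(\chi\circ\tau)}{2}$, exactly the asserted form.

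The only genuinely new ingredient compared with Proposition 2.4 is the representation lemma [\cite{elqorachi}, Lemma 3.2], and this is where I expect the conceptual content to sit: for an involutive anti-automorphism the correct class of d'Alembert solutions admitting a single-$\chi$ representation is the abelian one (supplied by [\cite{belfkih}, Theorem 2.1]), while for an involutive automorphism one must instead restrict to central solutions, and it is precisely centrality that this lemma requires in order to produce $g=(\chi+\mu\,\chi\circ\tau)/2$. Once that representation is available, the derivation of the normalisation $\mu(z_0)\chi(\tau(z_0))=\chi(z_0)$ and of the final formula is identical to the anti-automorphism case, so I anticipate no further obstacle beyond correctly citing the automorphism version of the representation theorem.
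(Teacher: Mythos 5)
Your proposal is correct and follows essentially the same route as the paper: the paper's own (one-line) proof is precisely ``use [\emph{Elqorachi--Redouani}, Lemma 3.2] and the proof of the preceding proposition,'' and you have fleshed that out as intended -- Theorem 2.2(2) and (3) to pass to a central $g\in\mathcal{A}$, the cited lemma to write $g=(\chi+\mu\,\chi\circ\tau)/2$, and condition (2.5) at $x=z_0$ to force $\mu(z_0)\chi(\tau(z_0))=\chi(z_0)$. Your identification of centrality (rather than abelianness) as the hypothesis needed in the automorphism case is exactly the point the paper is making.
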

\section{Solutions of equation (\ref{eq9}) on semigroups}In this
section we determine the complex-valued solutions of (\ref{eq9}) for
any involutive morphism  $\tau$: $S\longrightarrow S$. By help of
Theorem 2.2 we express them  in terms of solutions of d'Alembert's
functional equation (1.13). We first prove the following two useful
lemmas.
\begin{lem} If $f$:
$S\longrightarrow \mathbb{C}$ is a solution of (\ref{eq9}), then for
all $x\in S$
\begin{equation}\label{sal1}
    f(x)=\mu(x)f(\tau(x)),
\end{equation}
\begin{equation}\label{sal2}
    f(x\tau(z_0)z_0) =\mu(\tau(z_0))f(z_0)f(x),
\end{equation}
\begin{equation}\label{sal3}
    f(xz_{0}^{2})=f(x)f(z_0),
\end{equation}
\begin{equation}\label{sal4}
    f(z_0)\neq 0\Longleftrightarrow f\neq 0.
\end{equation}
  \end{lem}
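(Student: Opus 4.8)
Since the four identities \eqref{sal1}--\eqref{sal4} are precisely the analogues, for the variant \eqref{eq9}, of the identities \eqref{eqo77}--\eqref{eqo88} that Lemma 2.1 establishes for \eqref{eq8}, the plan is to imitate the scheme used there: first prove the reflection formula \eqref{sal1}, then specialise $x=\tau(z_0)$ to obtain \eqref{sal2}, set $y=z_0$ to obtain \eqref{sal3}, and finally dispose of the equivalence \eqref{sal4} by a substitution argument. Throughout I would use $\tau\circ\tau=\mathrm{id}$, the identity $\mu(y)\mu(\tau(y))=\mu(y\tau(y))=1$, and the centrality of $z_0$ and of $\tau(z_0)$.

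The heart of the matter is \eqref{sal1}, and this is where \eqref{eq9} genuinely departs from \eqref{eq8}. Replacing $y$ by $\tau(y)$ in \eqref{eq9} and multiplying by $\mu(y)$ gives
\[
\mu(y)f(x\tau(y)z_0)+f(yxz_0)=2\mu(y)f(x)f(\tau(y)),\qquad x,y\in S .
\]
In Lemma 2.1 the corresponding left-hand side coincides with that of \eqref{eq8} and \eqref{eqo77} falls out at once, but here the inner factor $\tau(y)x$ of \eqref{eq9} has turned into $yx$, so the left-hand side above is \emph{not} the left-hand side of \eqref{eq9}; this reversal of the order of the factors is the main obstacle. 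To get around it I would adjoin to \eqref{eq9} the equation obtained by interchanging $x$ and $y$, together with the equation obtained from the latter by the substitution $x\mapsto\tau(x)$ followed by multiplication by $\mu(x)$, and then subtract so as to cancel the common term $f(xyz_0)$. Writing $D(x):=f(x)-\mu(x)f(\tau(x))$, this leaves the single relation
\[
\mu(y)f(\tau(y)xz_0)-\mu(x)f(y\tau(x)z_0)=2f(y)D(x),\qquad x,y\in S ,
\]
so that \eqref{sal1} is equivalent to $D\equiv 0$. I expect verifying $D\equiv 0$ to be the hard part: the case $f\equiv 0$ is trivial, and otherwise I would feed back into the displayed relation the built-in symmetry $D(\tau(x))=-\mu(\tau(x))D(x)$ (a direct consequence of the definition of $D$ and $\mu(x)\mu(\tau(x))=1$), possibly separating the automorphism and anti-automorphism cases, to force $D$ to vanish.

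Once \eqref{sal1} is in hand, the remaining three identities are routine. For \eqref{sal2} I would put $x=\tau(z_0)$ in \eqref{eq9} and apply \eqref{sal1} to $f(\tau(z_0))$ and to $f(\tau(y)\tau(z_0)z_0)$, using that $z_0\tau(z_0)$ is central; the two terms on the left then collapse to a single multiple of $f(\tau(z_0)yz_0)$. For \eqref{sal3} I would set $y=z_0$ in \eqref{eq9} and insert \eqref{sal2} together with $\mu(z_0)\mu(\tau(z_0))=1$. Finally, for \eqref{sal4} the nontrivial direction starts from the assumption $f(z_0)=0$: replacing $x$ by $xz_0$ and $y$ by $yz_0$ in \eqref{eq9} and simplifying with \eqref{sal2} and \eqref{sal3} should yield $f(xz_0)=0$ for every $x\in S$, whereupon \eqref{eq9} gives $2f(x)f(y)=0$ and hence $f\equiv 0$; the converse implication is immediate.
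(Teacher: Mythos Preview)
Your outline coincides with the paper's proof: the displayed relation you derive,
\[
\mu(y)f(\tau(y)xz_0)-\mu(x)f(y\tau(x)z_0)=2f(y)D(x),
\]
is, after relabelling, exactly the paper's intermediate identity (3.7), and your treatment of \eqref{sal2}--\eqref{sal4} matches the paper's step for step.

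The only point you leave open is the passage from this relation to $D\equiv 0$, and here you overestimate the difficulty: no case split between automorphism and anti-automorphism is needed, and the completion is mechanical. Substitute $x\mapsto\tau(x)$ in your relation and multiply by $\mu(x)$, using the anti-symmetry $D(\tau(x))=-\mu(\tau(x))D(x)$ you already recorded, to get
\[
\mu(x)\mu(y)f(\tau(y)\tau(x)z_0)-f(yxz_0)=-2f(y)D(x);
\]
then substitute $y\mapsto\tau(y)$ in the original relation and multiply by $\mu(y)$ to get
\[
f(yxz_0)-\mu(x)\mu(y)f(\tau(y)\tau(x)z_0)=2\mu(y)f(\tau(y))D(x).
\]
Adding these two identities gives $0=-2D(y)D(x)$ for all $x,y\in S$, hence $D\equiv 0$. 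This is precisely the manoeuvre the paper performs (its equations (3.8), (3.9) and their sum), so what you flagged as ``the hard part'' is in fact a two-line completion of the argument you already have.
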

  \begin{proof} Equation (3.1): Interchanging $x$ and $y$ in (\ref{eq9}) and multiplying the two members of the equation by $\mu(\tau(y))$   we
  get \begin{equation}\label{sal5}
\mu(x)\mu(\tau(y))f(\tau(x)yz_0)+\mu(\tau(y))f(yxz_0)
=2f(x)\mu(\tau(y))f(y), \;x,y\in S.
\end{equation}Replacing  $y$ by $\tau(y)$ in (\ref{eq9}) we
  obtain \begin{equation}\label{sal6}
\mu(\tau(y))f(yxz_0)+f(x\tau(y)z_0) =2f(x)f(\tau(y)), \;x,y\in S.
\end{equation}By subtracting (\ref{sal6}) from (\ref{sal5}) we get
\begin{equation}\label{sal7}
\mu(x\tau(y))f(\tau(x)yz_0)-f(x\tau(y)z_0)
=2f(x)[\mu(\tau(y))f(y)-f(\tau(y)], \;x,y\in S.\end{equation}By
replacing $x$ by $\tau(x)$ in (\ref{sal7}) we have
\begin{equation}\label{sal8}
\mu(\tau(x)\tau(y))f(xyz_0)-f(\tau(x)\tau(y)z_0)
=2f(\tau(x))[\mu(\tau(y))f(y)-f(\tau(y)], \;x,y\in S.\end{equation}
Replacing $y$ by $\tau(y)$ in (\ref{sal7}) and multiplying the two
members of the equation by $\mu(\tau(y)\tau(x))$ we obtain
\begin{equation}\label{sal9}
f(\tau(x)\tau(y)z_0)-\mu(\tau(x)\tau(y))f(xyz_0)
=2f(x)\mu(\tau(x))[f(\tau(y))-\mu(\tau(y))f(y)], \;x,y\in
S.\end{equation}Now, by adding (\ref{sal8}) and (\ref{sal9}) we get
$[f(\tau(x))-\mu(\tau(x))f(x)][f(\tau(y))-\mu(\tau(y))f(y)]=0$ for
all $x,y\in S.$ This proves (3.1).\\Equation (3.2): Taking
$x=\tau(z_0)$ in (\ref{eq9}) and using (3.1) we get
$$\mu(y)f(\tau(y)\tau(z_0)z_0)+f(\tau(z_0)yz_0)=2\mu(\tau(z_0))f(z_0)f(y)$$
$$=f(\tau(z_0)yz_0)+\mu(y)\mu(\tau(y)\tau(z_0)z_0)f(\tau(z_0)yz_0)=2f(\tau(z_0)yz_0).$$
Which implies (3.2).\\
Equation (3.3): By replacing $y$ by $z_0$ in (\ref{eq9}) and using
(3.2) we obtain
$$\mu(z_0)f(\tau(z_0)xz_0)+f(x{z_0}^{2})=2f(z_0)f(x)$$
$$=\mu(z_0)\mu(\tau(z_0))f(z_0)f(x)+f(x{z_0}^{2}).$$So, we deduce
(3.3).\\Equation (3.4): The proof is similar to  the proof of
(2.4).\end{proof}
\begin{lem}Let $\mathcal{M}$ consist of the solutions $g$: $S\longrightarrow \mathbb{C}$ of the variant d'Alembert's functional equation (1.15) with $g(z_0)\neq0$
and satisfying the condition (2.5). Let $\mathcal{N}$ consist of the
non-zero solutions $f$: $S\longrightarrow \mathbb{C}$ of the variant
Kannappan's functional equation (1.10); Then \\(1) The map $J$:
$\mathcal{M}\longrightarrow \mathcal{N}$  defined by $Jh:=h(z_0)h$:
$S\longrightarrow \mathbb{C}$ is a bijection. The inverse $J^{-1}$;
$\mathcal{N}\longrightarrow \mathcal{M}$ is given by the formula
$(J^{-1}f)(x)=\frac{f(xz_0)}{f(z_0)}=g(x)$ for all $x\in S$ and for
all $f\in \mathcal{N}$.  Furthermore, \\(2) If $\tau$:
$S\longrightarrow S$ is an involutive automorphism the function $g$
has the form $g=\frac{\chi+\mu\chi\circ \tau}{2}$, where $\chi$ :
$S\longrightarrow \mathbb{C}$, $\chi\neq 0$, is a multiplicative
function.\\(3) If $\tau$: $S\longrightarrow S$ is an involutive
anti-automorphism the function $g$ satisfies the d'Alembert's
functional equation (1.13).
\end{lem}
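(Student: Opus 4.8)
The plan is to prove Lemma 3.7 (the statement about the maps $J$, $J^{-1}$ and the structure of $g$) in three parts, closely mirroring the structure already established for equation (\ref{eq8}) in Theorem 2.2. First I would establish part (1), the bijectivity of $J$. The forward direction — showing $J(\mathcal{M})\subseteq\mathcal{N}$ — should be a direct computation: for $h\in\mathcal{M}$ one substitutes $Jh=h(z_0)h$ into the variant Kannappan equation (\ref{eq9}) and uses that $h$ solves the variant d'Alembert equation (\ref{om2}) together with the multiplicativity condition (\ref{pr2}), namely $h(xz_0)=h(z_0)h(x)$. The key identities are $Jh(xyz_0)=h(z_0)h(xyz_0)=h(z_0)^2h(xy)$ and $\mu(y)Jh(\tau(y)xz_0)=h(z_0)^2\mu(y)h(\tau(y)x)$, so that the left side collapses to $h(z_0)^2\bigl[h(xy)+\mu(y)h(\tau(y)x)\bigr]=2h(z_0)^2h(x)h(y)=2\,Jh(x)\,Jh(y)$, and $Jh(z_0)=h(z_0)^2\neq0$ confirms $Jh\in\mathcal{N}$.

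For the reverse direction I would mimic the surjectivity argument of Theorem 2.2 almost verbatim, using Lemma 3.6 in place of Lemma 2.1. Given $f\in\mathcal{N}$, relation (\ref{sal4}) guarantees $f(z_0)\neq0$, so $g(x):=f(xz_0)/f(z_0)$ is well-defined; one then checks that $g$ solves (\ref{om2}) by the same chain of substitutions used in Theorem 2.2, invoking (\ref{sal2}) and (\ref{sal3}) to rewrite the products with $z_0$. The condition (\ref{pr2}) for $g$, i.e. $g(xz_0)=g(z_0)g(x)$, is obtained exactly as in Theorem 2.2 by comparing the two special substitutions (replace $x$ by $xz_0^2$, $y$ by $z_0$, versus $x$ by $xz_0$, $y$ by $z_0^2$) and deducing $f(z_0^2)f(xz_0)=f(xz_0^2)f(z_0)$. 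Injectivity of $J$ follows by adapting Stetk\ae r's argument (\cite{stkan}, Lemma 3) as was done for $T$, and the formula $J^{-1}f=g$ drops out of the construction.

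Parts (2) and (3) then follow by plugging part (1) into the existing classification results for the variant d'Alembert equation. For part (2), when $\tau$ is an involutive automorphism, the function $g\in\mathcal{M}$ is by definition a non-zero solution of (\ref{om2}), and Elqorachi–Redouani's theorem (quoted in the introduction around (\ref{om2})) gives immediately that $g=\frac{\chi+\mu\,\chi\circ\tau}{2}$ for a non-zero multiplicative $\chi$; here I would cite \cite{elqorachi}. For part (3), when $\tau$ is an involutive anti-automorphism, I expect that $g$ actually satisfies the ordinary $\mu$-d'Alembert equation (\ref{eq12}); the point is that for an anti-automorphism the two variants coincide after exploiting the centrality of $z_0$ and the symmetry relation (\ref{sal1}), $g(x)=\mu(x)g(\tau(x))$, which transfers from $f$ to $g$ via the definition $g(x)=f(xz_0)/f(z_0)$. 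Concretely, one rewrites $g(\tau(y)x)$ using $\tau(\tau(y)x)=\tau(x)y$ and the symmetry of $g$ to convert $\mu(y)g(\tau(y)x)$ into $\mu(y)g(x\tau(y))$, turning (\ref{om2}) into (\ref{eq12}).

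The main obstacle I anticipate is part (3): verifying that the variant equation (\ref{om2}) for $g$ genuinely reduces to (\ref{eq12}) requires careful bookkeeping of the anti-automorphism's order-reversal together with the multiplicative factor $\mu$ and the symmetry (\ref{sal1}). One must confirm that $g$ inherits the relation $g(x)=\mu(x)g(\tau(x))$ from $f$ — which should follow by applying (\ref{sal1}) to the argument $xz_0$ and using $\mu(x\tau(x))=1$ with centrality of $z_0$ — and that this, combined with $\tau$ being an anti-automorphism, is exactly what is needed to swap the two arguments inside $g$. Once this symmetry is secured, the reduction is algebraic; the delicate step is tracking the $\mu$-factors so that no spurious $\mu(z_0)$ or $\mu(\tau(z_0))$ terms survive, which is where the hypothesis $\mu(x\tau(x))=1$ and the consequence that $\tau(z_0)$ is central will be used without explicit mention.
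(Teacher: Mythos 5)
Parts (1) and (2) of your proposal coincide with the paper's proof: the inclusion $J(\mathcal{M})\subseteq\mathcal{N}$ by direct substitution using $h(xz_0)=h(z_0)h(x)$, surjectivity by repeating the Theorem~2.2 computation with Lemma~3.1 in place of Lemma~2.1, injectivity by adapting Stetk\ae r's argument, and part (2) by citing the Elqorachi--Redouani classification \cite{elqorachi} of solutions of (1.15) for involutive automorphisms. Nothing to add there.

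Part (3), however, contains a genuine gap. Your key step --- ``rewrite $g(\tau(y)x)$ using $\tau(\tau(y)x)=\tau(x)y$ and the symmetry $g=\mu\cdot(g\circ\tau)$ to convert $\mu(y)g(\tau(y)x)$ into $\mu(y)g(x\tau(y))$'' --- does not produce what you claim. Carrying out that manipulation gives
$\mu(y)g(\tau(y)x)=\mu(y)\mu(\tau(y)x)\,g(\tau(x)y)=\mu(x)g(\tau(x)y)$,
i.e.\ you land back on the variant equation with $x$ and $y$ interchanged, not on (1.13). To reach (1.13) you need $g(\tau(y)x)=g(x\tau(y))$, which (since $\tau$ is bijective) is exactly centrality of $g$. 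Centrality can in fact be extracted --- inserting the identity $\mu(y)g(\tau(y)x)=\mu(x)g(\tau(x)y)$ into (1.15) written at $(x,y)$ and at $(y,x)$ and subtracting forces $g(xy)=g(yx)$, after which the variant does collapse to (1.13) --- but this symmetrization step is missing from your argument, and without it the reduction fails. A secondary weak point: transferring the relation $f=\mu\cdot(f\circ\tau)$ of (3.1) to $g=\mu\cdot(g\circ\tau)$ is not the one-liner you suggest, because for an anti-automorphism $\tau(xz_0)=\tau(z_0)\tau(x)$, and trading the resulting $\tau(z_0)$ for a $z_0$ via (3.2)--(3.3) is only immediate on arguments already lying in $Sz_0$. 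The paper avoids both difficulties by invoking the external result [\cite{elqorachi}, Theorem~2.1(1)(i)] with $\delta=0$, which states directly that solutions of (1.15) for an involutive anti-automorphism satisfy d'Alembert's equation (1.13); if you want a self-contained proof you must supply the centrality argument sketched above.
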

\begin{proof} For all $h\in\mathcal{ M}$ we have $$Jh(xyz_0)+\mu(y)Jh(\tau(y)xz_0)=h(z_0)h(xyz_0)+\mu(y)h(z_0)h(\tau(y)xz_0)$$
$$=h(z_0)^{2}[h(xy)+\mu(y)h(\tau(y)x)]=2h(z_0)h(x)h(z_0)h(y)=2Jh(x)Jh(y).$$
Furthermore, $Jh(z_0)=h(z_0)^{2}\neq0$. So, $Jh\in \mathcal{N}.$ By
adapting the proof of [\cite{stkan}, Lemma 3] $J$ is injective. Now,
let $f\in \mathcal{N}$ and let $g(x):=\frac{f(xz_0)}{f(z_0)}$ for
$x\in S.$ By using the definition of $g$, equations (1.10), (3.2)
and (3.3) we get
$$f(z_0)^{2}[g(xy)+\mu(y)g(\tau(y)x)-2g(x)g(y)]$$
$$=f(z_0)f(xyz_0)+\mu(y)f(z_0)f(\tau(y)xz_0)-2f(xz_0)f(yz_0)$$$$=f(xyz_0z_{0}^{2})+\mu(y)\mu(z_0)f(\tau(y)xz_0\tau(z_0)z_0)-2f(xz_0)f(yz_0)$$
$$f(xz_0yz_0z_0)+\mu(yz_0)f(\tau(yz_0)xz_0z_0)-2f(xz_0)f(yz_0)=0.$$ Since
$f(z_0)\neq0$ then $g$ satisfies (1.15). By using similar computations as in the proof of Theorem 2.2 we get that $g(xz_0)=g(z_0)g(x)$ for all $x\in S.$\\
(2) If $\tau$: $S\longrightarrow S$ is an involutive automorphism
then from [\cite{elqorachi}, Lemma 3.2]  $g$ from has the form
$g=\frac{\chi+\mu\chi\circ \tau}{2}$, where $\chi$ :
$S\longrightarrow \mathbb{C}$, $\chi\neq 0$, is a multiplicative
function.\\(3) If $\tau$: $S\longrightarrow S$ is an involutive
anti-automorphism then by adapting the proof of  [\cite{elqorachi},
Theorem 2.1(1)(i)] for $\delta=0$ we get   $g$ satisfies the
d'Alembert's functional equation (1.13).
   \end{proof}
   \begin{thm}
   (1) Let $\tau$: $S\longrightarrow S$ be an involutive automorphism. The non-zero solutions $f$ : $S\longrightarrow \mathbb{C}$ of the functional equation (\ref{eq9})
are the functions of the form
\begin{equation}\label{eq300}
    f=\frac{\chi+ \mu\chi\circ\tau}{2}\chi(z_0),
\end{equation}where $\chi$ : $S\longrightarrow \mathbb{C}$ is a
multiplicative function such that $\chi(z_0)\neq 0$ and
$\mu(z_0)\chi(\tau(z_0))=\chi(z_0)$. \\
(2)  Let $\tau$: $S\longrightarrow S$ be an involutive
anti-automorphism. The non-zero solutions $f$ : $S\longrightarrow
\mathbb{C}$ of the functional equation (\ref{eq9}) are the functions
of the form $f=g(z_0)g$,  where $g$ is a
 solution of d'Alembert's functional equation (1.13)
with $g(z_0)\neq 0$ and satisfying the condition
$g(xz_0)=g(z_0)g(x)$ for all $x\in S$.
\end{thm}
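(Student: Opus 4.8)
The plan is to read off both parts of the theorem from the bijection $J\colon\mathcal{M}\to\mathcal{N}$ established in Lemma 3.2, so that the only genuinely new work is to make the description of $\mathcal{M}$ explicit and to check the converse inclusions. Let $f$ be a non-zero solution of (\ref{eq9}). By Lemma 3.1 we have $f(z_0)\neq0$, so $f\in\mathcal{N}$, and Lemma 3.2(1) yields $f=J(g)=g(z_0)g$ with $g=J^{-1}f\in\mathcal{M}$; thus $g$ solves the variant d'Alembert equation (1.15), $g(z_0)\neq0$, and $g(xz_0)=g(z_0)g(x)$ for all $x\in S$.

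For part (1), with $\tau$ an involutive automorphism, Lemma 3.2(2) gives $g=\frac{\chi+\mu\chi\circ\tau}{2}$ for a non-zero multiplicative $\chi\colon S\to\mathbb{C}$. The one computation that must be carried out is to pin down the relation between $\chi$ and $z_0$. I would put $x=z_0$ in the normalization (2.5), obtaining $g(z_0^2)=g(z_0)^2$; expanding both sides by the multiplicativity of $\chi$ and $\mu$ collapses this to $(\chi(z_0)-\mu(z_0)\chi(\tau(z_0)))^2=0$, hence $\mu(z_0)\chi(\tau(z_0))=\chi(z_0)$. This forces $g(z_0)=\chi(z_0)\neq0$, so that $f=g(z_0)g=\chi(z_0)\frac{\chi+\mu\chi\circ\tau}{2}$, the asserted form. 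The converse, that every such $f$ solves (\ref{eq9}), I would settle by direct substitution, using that $\tau$ is an automorphism, that $\mu$ is multiplicative with $\mu(y\tau(y))=1$, and the relation $\mu(z_0)\chi(\tau(z_0))=\chi(z_0)$: after expanding $f(xyz_0)$ and $\mu(y)f(\tau(y)xz_0)$, the four resulting terms regroup as $\frac{\chi(z_0)^2}{2}(\chi(x)+\mu(x)\chi(\tau(x)))(\chi(y)+\mu(y)\chi(\tau(y)))=2f(x)f(y)$.

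For part (2), with $\tau$ an involutive anti-automorphism, Lemma 3.2(3) tells us that the same $g\in\mathcal{M}$ solves d'Alembert's equation (1.13); together with $g(z_0)\neq0$ and (2.5) this already exhibits $f=g(z_0)g$ in the claimed form, giving one inclusion. It remains to verify the converse, and this is where I expect the only real obstacle. Substituting $f=g(z_0)g$ into (\ref{eq9}) and applying (2.5) twice reduces the target identity to the variant equation (1.15), namely $g(xy)+\mu(y)g(\tau(y)x)=2g(x)g(y)$; subtracting (1.13) shows this is equivalent to $g(x\tau(y))=g(\tau(y)x)$, i.e. to centrality of $g$.

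Thus the crux is to prove that a non-zero solution $g$ of (1.13) for an anti-automorphic $\tau$ is central. I would first recover the symmetry $g=\mu\cdot(g\circ\tau)$: replacing $y$ by $\tau(y)$ in (1.13), multiplying by $\mu(y)$, and using $\mu(y)\mu(\tau(y))=1$ returns exactly the left-hand side of (1.13), so $2g(x)g(y)=2\mu(y)g(x)g(\tau(y))$, whence $g(y)=\mu(y)g(\tau(y))$. Feeding this into $\tau(x\tau(y))=y\tau(x)$ gives $\mu(y)g(x\tau(y))=\mu(x)g(y\tau(x))$; comparing (1.13) for the pairs $(x,y)$ and $(y,x)$ then cancels these terms and leaves $g(xy)=g(yx)$. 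With centrality in hand, (1.13) and (1.15) coincide on $g$, so such $g$ lies in $\mathcal{M}$ and $f=g(z_0)g$ is indeed a solution, completing the characterization.
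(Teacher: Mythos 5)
Your proposal is correct, and its forward direction follows the paper's own route exactly: pass from $f$ to $g=J^{-1}f$ via the bijection of Lemma 3.2, invoke Lemma 3.2(2) (resp.\ (3)) according to whether $\tau$ is an automorphism or an anti-automorphism, and extract $\mu(z_0)\chi(\tau(z_0))=\chi(z_0)$ from $g(z_0^2)=g(z_0)^2$. Where you genuinely add something is in the converse inclusions, which the paper passes over in silence (its proof of part (2) is just ``Combining Theorem 2.2 and Lemma 3.2(2) we find (2)''). In particular, for part (2) you correctly identify that exhibiting $f=g(z_0)g$ as a solution of (1.10) requires knowing that a non-zero solution $g$ of d'Alembert's equation (1.13) with anti-automorphic $\tau$ satisfies the variant (1.15) as well, i.e.\ that $g$ is central; your derivation of centrality (first $g=\mu\cdot(g\circ\tau)$, then $\mu(y)g(x\tau(y))=\mu(x)g(y\tau(x))$, then comparison of (1.13) at $(x,y)$ and $(y,x)$) is sound and is nowhere spelled out in the paper. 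So the two arguments buy the same characterization, but yours closes a verification the paper leaves implicit; the only cost is a slightly longer write-up.
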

    \begin{proof}Let $f$: $S\longrightarrow S$ be a non-zero solution of equation (\ref{eq9}). From Theorem 2.2 (2) $f=g(z_0)g(x)=g(xz_0)$, where $g$ is a solution of d'Alembert's
     functional equation (1.4). We will discuss two possibilities.
    \\ (1) $\tau$ is an involutive automorphism of $S$. From Lemma 3.2 there exists $\chi:$ $S\longrightarrow \mathbb{C}$ a multiplicative function such that
    $g=\frac{\chi+ \mu\chi\circ\tau}{2}$. So,
    \begin{equation}\label{rkia}
    f=g(z_0)=\frac{\chi+ \mu\chi\circ\tau}{2}g(z_0)=\frac{\chi(z_0)+ \mu(z_0)\chi\circ\tau(z_0)}{2}\frac{\chi+ \mu\chi\circ\tau}{2}.\end{equation}
     By using $g(z_{0}^{2})=g(z_0)^{2}$ we get after simple computation
     that $\chi(z_0)=\mu(z_0)\chi(\tau(z_0))$. This proves (1).\\
      (2) $\tau$ is an involutive anti-automorphism of $S$. Combining Theorem 2.2 and Lemma 3.2 (2) we find (2). This completes the proof.\end{proof}
     \section{Solutions of
 equation (\ref{eq10})} The solutions of the functional equation
(\ref{eq10}) with $\tau$ an involutive anti-automorphism are
explicitly obtained  by Bouikhalene and Elqorachi \cite{preprint} on
semigroups not necessarily abelian in terms of multiplicative
functions. In this section we obtain a similar formula  for the
solutions  of the functional equation  (\ref{eq10}) when  $\tau$ was
an involutive automorphism. The following lemma is obtained in
\cite{preprint} for the case where $\tau$ is an involutive
anti-automorphism. It still holds for the case where $\tau$ is an
involutive automorphism.
\begin{lem} Let $f \neq 0$ be a solution of (\ref{eq10}). Then for
all $x\in S$ we have
\begin{equation}\label{eqq7}
    f(x)=-\mu(x)f(\tau(x)),
\end{equation}
\begin{equation}\label{eqq8}
    f(z_0)\neq 0,
\end{equation}
\begin{equation}\label{eq9'}
    f(z_{0}^{2})=0,
\end{equation}
\begin{equation}\label{eqq9}
    f(x\tau(z_0)z_0)=\mu(\tau(z_0))f(x)f(z_0),
\end{equation}
\begin{equation}\label{eqq10}
    f(xz_{0}^{2})=-f(z_0)f(x),
\end{equation}
\begin{equation}\label{eqq11}
    \mu(x)f(\tau(x)z_0)=f(xz_0).
\end{equation} The function $g(x)=\frac{f(xz_0)}{f(z_0)}$ is a
non-zero solution of d'Alembert's functional equation (1.13).
 \end{lem}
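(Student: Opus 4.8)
The plan is to establish the six pointwise identities in the order in which they feed into one another, and then assemble the d'Alembert equation for $g$ from a pair of auxiliary bilinear identities. I would open exactly as in Lemma 2.1: replace $y$ by $\tau(y)$ in (\ref{eq10}), multiply the resulting equation by $\mu(y)$, and use $\mu(y\tau(y))=1$ together with $\tau\circ\tau=\mathrm{id}$. This turns the left-hand side back into the left-hand side of (\ref{eq10}) up to sign, so comparing with (\ref{eq10}) gives $f(x)f(y)=-\mu(y)f(x)f(\tau(y))$ for all $x,y$; choosing $x$ with $f(x)\neq0$ (possible since $f\neq0$) yields (\ref{eqq7}). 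This is the one genuinely new sign relative to the Kannappan case, and every later step must respect it.

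Next I would obtain the two ``$z_0$'' relations. Setting $x=\tau(z_0)$ in (\ref{eq10}), applying (\ref{eqq7}) to the term $f(\tau(z_0)\tau(y)z_0)$ and using centrality of $z_0,\tau(z_0)$, collapses the left side to $-2f(\tau(z_0)yz_0)$ and gives (\ref{eqq9}); then putting $y=z_0$ in (\ref{eq10}) and inserting (\ref{eqq9}) produces (\ref{eqq10}), where the minus sign is what forces $f(xz_0^2)=-f(z_0)f(x)$ rather than $+$. For (\ref{eqq8}) I would argue by contradiction as for (2.4): if $f(z_0)=0$, then (\ref{eqq9}) and (\ref{eqq10}) annihilate every value of the form $f(W\tau(z_0)z_0)$ and $f(Wz_0^2)$, so replacing $x,y$ by $xz_0,yz_0$ in (\ref{eq10}) forces $f(xz_0)f(yz_0)=0$ for all $x,y$, whence $f(\cdot\,z_0)\equiv0$, and then (\ref{eq10}) gives $f\equiv0$.

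The core of the argument is a pair of bilinear identities obtained by feeding $z_0$ into the variables. Substituting $x\mapsto xz_0$ and $y\mapsto yz_0$ separately in (\ref{eq10}) and reducing the triple-$z_0$ terms by (\ref{eqq9}) and (\ref{eqq10}) yields
$$f(z_0)\bigl[f(xy)-\mu(y)f(x\tau(y))\bigr]=2f(xz_0)f(y),\qquad f(z_0)\bigl[f(xy)+\mu(y)f(x\tau(y))\bigr]=2f(x)f(yz_0).$$
From (\ref{eqq7}) one reads off directly that $f(z_0\tau(z_0))=0$, and substituting $x=y=z_0$ into the first identity then gives $f(z_0)f(z_0^2)=2f(z_0^2)f(z_0)$, i.e. (\ref{eq9'}) after dividing by $f(z_0)\neq0$. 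Subtracting the two identities gives $f(z_0)\mu(y)f(x\tau(y))=f(x)f(yz_0)-f(xz_0)f(y)$; taking $y=z_0$ and using (\ref{eq9'}) produces $f(xz_0)=-\mu(z_0)f(x\tau(z_0))$, from which (\ref{eqq11}) follows after one application of (\ref{eqq7}) and $\mu(x\tau(x))=1$.

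Finally, for $g(x)=f(xz_0)/f(z_0)$ I would verify (\ref{eq12}) by the reduction used in Theorem 2.2: rewrite $f(z_0)^2[g(xy)+\mu(y)g(x\tau(y))]$ as $-f(xyz_0^3)-\mu(y)f(x\tau(y)z_0^3)$ via (\ref{eqq10}), and independently compute $2f(xz_0)f(yz_0)=2f(z_0)^2g(x)g(y)$ by substituting $xz_0,yz_0$ into (\ref{eq10}) and applying (\ref{eqq9})--(\ref{eqq10}); the two expressions coincide, yielding (\ref{eq12}) after division by $f(z_0)^2$. That $g\neq0$ follows since $f(\cdot\,z_0)\equiv0$ would force $f\equiv0$ through (\ref{eq10}). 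The main obstacle is bookkeeping: the minus sign has to be tracked faithfully through (\ref{eqq10}), and the reductions turning triple-$z_0$ arguments back into single-$z_0$ values via (\ref{eqq9})--(\ref{eqq10}) are where an error is easiest; the one non-mechanical idea is to read $f(z_0^2)=0$ off the symmetric substitution $x=y=z_0$ in the bilinear identity rather than trying to produce it directly.
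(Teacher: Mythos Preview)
The paper does not supply its own proof of this lemma; it simply refers to \cite{preprint} for the anti-automorphism case and asserts that the automorphism case holds as well. Your argument is correct and self-contained: the derivations of (\ref{eqq7}), (\ref{eqq9}), (\ref{eqq10}) and (\ref{eqq8}) follow the pattern of Lemma~2.1 with the sign change tracked correctly, and the final verification that $g$ satisfies (\ref{eq12}) goes through.

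One point of comparison is worth recording. The pair of identities
\[
f(z_0)\bigl[f(xy)-\mu(y)f(x\tau(y))\bigr]=2f(xz_0)f(y),\qquad
f(z_0)\bigl[f(xy)+\mu(y)f(x\tau(y))\bigr]=2f(x)f(yz_0)
\]
that you obtain by substituting $x\mapsto xz_0$ and $y\mapsto yz_0$ is precisely what the authors derive later in the proof of Theorem~4.2 (their equations (4.8)--(4.9), after dividing by $f(z_0)$). There, however, they \emph{add} the two to produce the sine addition law $f(xy)=f(x)g(y)+f(y)g(x)$ and then appeal to an external result to conclude that $g$ is abelian. You instead specialize $x=y=z_0$ in the first identity (together with $f(z_0\tau(z_0))=0$ from (\ref{eqq7})) to obtain (\ref{eq9'}) directly, and \emph{subtract} the two and set $y=z_0$ to get $f(xz_0)=-\mu(z_0)f(x\tau(z_0))$, from which (\ref{eqq11}) follows via (\ref{eqq7}). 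This is a different, and somewhat cleaner, route to (\ref{eq9'}) and (\ref{eqq11}) than the one taken for the parallel Lemma~5.1, where the analogue of (\ref{eqq11}) is established first by a longer chain of substitutions and only then used to deduce $f(z_0^2)=0$. Your approach gets both facts out of the same bilinear identities with no extra work.
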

Now, we are ready to prove the  main result of this section.\\
In \cite{preprint} we used [\cite{07}, Proposition 8.14] to prove
that the function $g$ defined in Lemma 4.1 is an abelian solution of
(1.13), where $\tau$ is an involutive anti-automorphism of $S$. This
reasoning no longer works for the present situation. We will use
another approach.
\begin{thm} The non-zero solutions $f$ : $S\longrightarrow \mathbb{C}$ of
the functional equation (\ref{eq10}), where $\tau$ is an involutive
morphism of $S$ are the functions of the form
\begin{equation}\label{eq300}
    f=\chi(z_0)\frac{\mu\chi\circ\tau-\chi}{2},
\end{equation}where $\chi$ : $S\longrightarrow \mathbb{C}$ is a
multiplicative function such that $\chi(z_0)\neq 0$ and
$\mu(z_0)\chi(\tau(z_0))=-\chi(z_0)$.\\If $S$ is a topological
semigroup and that $\tau$ : $S\longrightarrow S$, $\mu$ :
$S\longrightarrow \mathbb{C}$ are continuous, then the non-zero
solution $f$ of equation (\ref{eq10}) is continuous, if and only if
$\chi$ is continuous.
\end{thm}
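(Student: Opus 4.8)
The plan is to reduce the variant equation (\ref{eq10}) with $\tau$ an involutive automorphism to the structural information already packaged in Lemma~4.1, and then pin down the multiplicative function $\chi$ via the auxiliary d'Alembert solution $g$. I first record the two halves of the claim. For the \emph{easy direction} (verification) I would simply substitute a function of the form $f=\chi(z_0)\tfrac{\mu\chi\circ\tau-\chi}{2}$ into (\ref{eq10}), using that $\chi$ and $\mu$ are multiplicative, that $\tau$ is an involutive automorphism (so $\chi\circ\tau$ is again multiplicative and $\mu(x\tau(x))=1$), and the normalization $\mu(z_0)\chi(\tau(z_0))=-\chi(z_0)$; this is a routine computation that I would only sketch. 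The substance is the \emph{converse}: showing every non-zero solution has this form.

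For the converse I would start from the function $g(x)=f(xz_0)/f(z_0)$ supplied by Lemma~4.1, which is guaranteed to be a non-zero solution of the $\mu$-d'Alembert equation (\ref{eq12}) (written (1.13) in the text). The key leverage is that $\tau$ is now an involutive \emph{automorphism}, so I can invoke [\cite{elqorachi}, Lemma 3.2] exactly as in the proof of Theorem~3.3(1): every non-zero solution of (\ref{eq12}) with automorphic $\tau$ has the shape $g=\tfrac{\chi+\mu\chi\circ\tau}{2}$ for some non-zero multiplicative $\chi:S\to\mathbb{C}$. At this point $g$ is completely described, and I only need to transport this description back to $f$ through the relation between $f$ and $g$.

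To recover $f$ itself I would combine $g(x)=f(xz_0)/f(z_0)$ with the identity $f(xz_0^{2})=-f(z_0)f(x)$ from (\ref{eqq10}): evaluating $g$ on $xz_0$ gives $g(xz_0)=f(xz_0^{2})/f(z_0)=-f(x)$, so $f(x)=-f(z_0)\,g(xz_0)$. I then need the multiplicativity relation $g(xz_0)=g(z_0)g(x)$, which I would derive exactly as in Theorem~2.2 (the step producing $f(z_0^{2})f(xz_0)=f(xz_0^{2})f(z_0)$, here adjusted by the sign from (\ref{eqq10})); this yields $f=-f(z_0)g(z_0)\,g$. Finally I would substitute $g=\tfrac{\chi+\mu\chi\circ\tau}{2}$ and fix the scalar: using $g(z_0^{2})=g(z_0)^2$ (or equivalently $f(z_0^{2})=0$ from (\ref{eq9'})) forces $\chi(z_0)+\mu(z_0)\chi(\tau(z_0))=0$, i.e.\ the normalization $\mu(z_0)\chi(\tau(z_0))=-\chi(z_0)$, and a short computation identifies the prefactor so that $f=\chi(z_0)\tfrac{\mu\chi\circ\tau-\chi}{2}$.

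The main obstacle I anticipate is precisely the bookkeeping of \emph{signs} that distinguishes (\ref{eq10}) from the Kannappan equation (\ref{eq8}): whereas there $f(xz_0^{2})=f(x)f(z_0)$, here the minus sign in (\ref{eqq10}) and the condition $f(z_0^{2})=0$ mean that $f$ is built from the \emph{difference} $\mu\chi\circ\tau-\chi$ rather than the sum, and the constraint on $\chi(z_0)$ flips to $\mu(z_0)\chi(\tau(z_0))=-\chi(z_0)$. I must be careful that the relation $f=-f(z_0)g(z_0)g$ together with $g(z_0^2)=g(z_0)^2$ genuinely produces the difference form and the correct scalar $\chi(z_0)$, rather than its negative; getting this consistent is the delicate point. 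The continuity statement is then immediate: since $\tau$ and $\mu$ are continuous and $\chi$ is recovered algebraically from $f$ (through $g$ and the representation formula), $f$ is continuous if and only if $\chi$ is, which I would state as an \emph{obvious} consequence of the explicit formula.
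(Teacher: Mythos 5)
There is a genuine gap at the heart of your argument, and it is exactly the difficulty the paper flags before Theorem 4.2 (``This reasoning no longer works for the present situation. We will use another approach.''). You propose to invoke [\cite{elqorachi}, Lemma 3.2] to conclude that every non-zero solution of (\ref{eq12}) with $\tau$ an automorphism has the form $g=\frac{\chi+\mu\chi\circ\tau}{2}$. But that lemma (and the result of \cite{elqorachi} quoted in the introduction) concerns the \emph{variant} equation (\ref{om2}), i.e. $g(xy)+\mu(y)g(\tau(y)x)=2g(x)g(y)$, whose solutions are automatically abelian; the function $g$ produced by Lemma 4.1 satisfies the \emph{standard} $\mu$-d'Alembert equation (\ref{eq12}), $g(xy)+\mu(y)g(x\tau(y))=2g(x)g(y)$, which in general admits non-abelian solutions (normalized traces of irreducible two-dimensional representations). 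So you cannot read off the multiplicative decomposition of $g$ directly; you must first prove that $g$ is abelian. The paper does this by substituting $x\mapsto xz_0$ and $y\mapsto yz_0$ into (\ref{eq10}) and adding the resulting identities to obtain the sine addition law $f(xy)=f(x)g(y)+f(y)g(x)$, whence $g$ is abelian by [\cite{ebanks}, Lemma 3.4] and then $g=\frac{\chi+\mu\chi\circ\tau}{2}$ by [\cite{belfkih}, Theorem 2.1]. This step is the entire content of the new proof and is absent from your plan.

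A second, more mechanical error: the relation $g(xz_0)=g(z_0)g(x)$ that you want to import from Theorem 2.2 is false in the present setting. By (\ref{eq9'}) we have $f(z_0^{2})=0$, hence $g(z_0)=f(z_0^{2})/f(z_0)=0$, while $g(xz_0)=f(xz_0^{2})/f(z_0)=-f(x)$ is not identically zero; likewise $g(z_0^{2})=-f(z_0)\neq 0=g(z_0)^{2}$. Consequently your formula $f=-f(z_0)g(z_0)g$ would force $f\equiv 0$. (Also note $g(xz_0)=-f(x)$ already gives $f(x)=-g(xz_0)$, without the extra factor $f(z_0)$ you wrote.) The correct recovery of $f$ is to use $f(x)=-g(xz_0)$ together with the explicit multiplicative form of $g$, or, as in the paper and in \cite{preprint}, to set $y=z_0$ in (\ref{eq10}) and then impose (\ref{eqq11}) to force $\mu(z_0)\chi(\tau(z_0))=-\chi(z_0)$; your alternative derivation of this normalization from $g(z_0)=0$ is fine, but the route through $g(xz_0)=g(z_0)g(x)$ is not.
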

\begin{proof} Let $f$ be a non-zero solution of
(\ref{eq10}). Replacing $x$ by $xz_0$ in (\ref{eq10}) and using
(\ref{eqq10}) we get
\begin{equation}\label{haj2}
-\mu(y)f(x\tau(y))+f(xy)=2f(y)g(x),\;x,y\in S,\end{equation} where
$g$ is the function defined in Lemma 4.1.\\
If we replace $y$ by $yz_0$ in (\ref{eq10}) and use (\ref{eqq10})
and (\ref{eqq9})  we get
\begin{equation}\label{haj4}
\mu(yz_0)\mu(\tau(z_0))f(x\tau(y))+f(xy)=2f(x)g(y)=\mu(y)f(x\tau(y))+f(xy),\;x,y\in
S.\end{equation} By adding (4.8) and (4.9) we get that the pair
$f,g$ satisfies the sine addition law
$$f(xy)=f(x)g(y)+f(y)g(x)\; \text{for all }\; x,y\in S.$$ Now, in
view of [\cite{ebanks}, Lemma 3.4], [\cite{07}, Theorem 4.1]    $g$
is abelian. Since $g$ is a non-zero solution of d'Alembert's
functional equation (1.13) so, from [\cite{belfkih}, Theorem 2.1]
there exists a non-zero multiplicative function $\chi$:
$S\longrightarrow \mathbb{C}$ such that
$g=\frac{\chi+\mu\chi\circ\tau}{2}$. The rest of the proof is
similar to the one used in  \cite{preprint}.
\end{proof}
\section{Solutions of equation (\ref{eq11}) } The solutions of
(\ref{eq11}) were obtained in \cite{preprint} on monoids for $\tau$
an involutive automorphism. In this section  we determine the
solutions of (\ref{eq11}) for the general case where $S$ is assumed
to be a semigroup and $\tau$  an involutive morphism of $S$.\\The
following useful lemmas  will be used later.
\begin{lem} Let $f$: $S\longrightarrow
\mathbb{C}$ be a  solution of equation (\ref{eq11}). Then for all
$x,y\in S$ we have
\begin{equation}\label{eqo7}
    f(x)=-\mu(x)f(\tau(x)),
\end{equation}
\begin{equation}\label{eqo8}
f\neq 0\Longleftrightarrow f(z_0)\neq0,
\end{equation}
\begin{equation}\label{eqoo8}
\mu(y)f(\tau(y)x)=-\mu(x)f(\tau(x)y),
\end{equation}
\begin{equation}\label{eqo9}
     f(x\tau(z_0)z_0)=\mu(\tau(z_0))f(z_0)f(x) ,
\end{equation}
\begin{equation}\label{eqo10}f(xz_{0}^{2})=-f(z_0)f(x),
\end{equation}
\begin{equation}\label{eqo11}
     \mu(x) f(\tau(x)z_0)= f(xz_0),
\end{equation}
\begin{equation}\label{elq10}
 f(x\tau(z_0))=\mu(x)f(\tau(x)\tau(z_0)),\end{equation}
 \begin{equation}\label{omar100}
     f(z_{0}^{2})=f(z_0\tau(z_0))=0.
\end{equation}
 \end{lem}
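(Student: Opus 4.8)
The plan is to peel off the eight identities in a definite order, because the two genuinely nontrivial relations \eqref{eqoo8} and \eqref{eqo11} will \emph{not} follow from the pointwise symmetry \eqref{eqo7} alone; they will instead be harvested from a sine addition law. I first record that $\mu$ never vanishes: if $\mu(x_0)=0$ then $1=\mu(x_0\tau(x_0))=\mu(x_0)\mu(\tau(x_0))=0$, a contradiction. To obtain \eqref{eqo7} I follow an argument parallel to the one establishing \eqref{sal1}: replace $y$ by $\tau(y)$ in \eqref{eq11} and multiply by $\mu(y)$, interchange $x$ and $y$ in \eqref{eq11}, and add; the right-hand side becomes $2f(x)\,[f(y)+\mu(y)f(\tau(y))]$. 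Applying the involution $x\mapsto\tau(x),\,y\mapsto\tau(y)$ to the result and multiplying by $\mu(x)\mu(y)$ produces a second equation whose left-hand side is the negative of the first, so adding the two gives
\begin{equation*}
[f(x)+\mu(x)f(\tau(x))]\,[f(y)+\mu(y)f(\tau(y))]=0,\qquad x,y\in S,
\end{equation*}
and hence \eqref{eqo7}.

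Next I extract the two ``absorption'' identities and the non-vanishing criterion. Putting $x=\tau(z_0)$ in \eqref{eq11}, then using \eqref{eqo7} (which gives $f(\tau(z_0))=-\mu(\tau(z_0))f(z_0)$) together with the centrality of $z_0$ and $\tau(z_0)$ to rewrite the $\mu(y)f(\tau(y)\tau(z_0)z_0)$ term against $f(\tau(z_0)yz_0)$, yields \eqref{eqo9}; substituting \eqref{eqo9} back into \eqref{eq11} with $y=z_0$ yields \eqref{eqo10}. The criterion \eqref{eqo8} is then proved exactly as \eqref{eqo88}: assuming $f(z_0)=0$, \eqref{eqo9} and \eqref{eqo10} force $f$ to vanish on $Sz_0^2$ and on $S\tau(z_0)z_0$, and the substitution $x\mapsto xz_0,\ y\mapsto yz_0$ in \eqref{eq11} then gives $f(xz_0)f(yz_0)=0$, whence $f(xz_0)\equiv0$ and finally $f\equiv0$; the converse is trivial.

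The engine of the proof is a sine addition law, so from here $f\neq0$ and $f(z_0)\neq0$ by \eqref{eqo8}; put $g(x):=f(xz_0)/f(z_0)$. Substituting $x\mapsto xz_0$ in \eqref{eq11} and collapsing the resulting $z_0^2$-arguments by \eqref{eqo10} gives $f(xy)-\mu(y)f(\tau(y)x)=2g(x)f(y)$, while substituting $y\mapsto yz_0$ and collapsing by \eqref{eqo9} and \eqref{eqo10} gives $f(xy)+\mu(y)f(\tau(y)x)=2f(x)g(y)$. Adding and subtracting these yields
\begin{equation*}
f(xy)=f(x)g(y)+f(y)g(x)\qquad\text{and}\qquad \mu(y)f(\tau(y)x)=f(x)g(y)-f(y)g(x).
\end{equation*}
The second relation is antisymmetric in $x,y$, which is \eqref{eqoo8}. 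Taking $x=y=z_0$ in the first gives $f(z_0^2)=2f(z_0)g(z_0)=2f(z_0^2)$, so $f(z_0^2)=0$ and $g(z_0)=0$; taking $x=y$ in \eqref{eqoo8} gives $f(\tau(x)x)=0$, and $x=z_0$ yields $f(z_0\tau(z_0))=0$, proving \eqref{omar100}. Setting $y=z_0$ in the second relation, where now $g(z_0)=0$, gives $\mu(z_0)f(\tau(z_0)x)=-f(xz_0)$; comparing with \eqref{eqoo8} at $y=z_0$, namely $\mu(z_0)f(\tau(z_0)x)=-\mu(x)f(\tau(x)z_0)$, gives \eqref{eqo11}. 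Finally \eqref{elq10} follows from \eqref{eqo11} and \eqref{eqo7} by rewriting both sides through the involution, using the centrality of $\tau(z_0)$.

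The step I expect to be the main obstacle is the sine addition law itself, and more precisely the realization that it is \emph{needed}: \eqref{eqoo8} and \eqref{eqo11} resist every attempt to deduce them from \eqref{eqo7} directly, since as soon as a central factor $z_0^2$ or $\tau(z_0)z_0$ is created it is swallowed by \eqref{eqo9}--\eqref{eqo10}, and on a semigroup with no identity element there is no way to cancel a free variable and isolate a single $z_0$. Thus the delicate part is to perform the substitutions $x\mapsto xz_0$ and $y\mapsto yz_0$ and to collapse the shifted arguments correctly and uniformly, whether $\tau$ is an automorphism or an anti-automorphism — which is legitimate precisely because $z_0$ and $\tau(z_0)$ lie in the center of $S$, so the order of the factors produced by $\tau$ is immaterial.
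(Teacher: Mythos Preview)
Your argument is correct, and it takes a genuinely different route from the paper. The paper's proof of Lemma~5.1 begins with a lengthy contradiction argument to establish \eqref{eqo8} first: assuming $f(z_0)=0$, it shows $f(xz_0)=\alpha f(x)$ for some $\alpha\ne 0$, feeds this into a variant of (\ref{eq11}), and then splits into separate computations according to whether $\tau$ is an automorphism or an anti-automorphism, in each case reaching $f=0$. With \eqref{eqo8} in hand the paper derives \eqref{eqoo8} from (\ref{eq11}) directly (without (5.4)--(5.5)), then \eqref{elq10}, then \eqref{eqo11}, and only near the end obtains \eqref{eqo7}, again by a case split on the type of $\tau$; finally \eqref{eqo9}, \eqref{eqo10}, \eqref{omar100}. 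The sine addition law is deferred to the next lemma.

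Your inversion of this order---proving \eqref{eqo7} first by the symmetrization trick, then \eqref{eqo9}--\eqref{eqo10}, then \eqref{eqo8} as a short corollary, and only then invoking the sine addition law to harvest \eqref{eqoo8}, \eqref{omar100}, \eqref{eqo11}, \eqref{elq10}---has two concrete advantages: it eliminates both case analyses on the nature of $\tau$ (your derivations of \eqref{eqo7} and of the sine addition law are uniform, since the only products to which $\tau$ is applied involve the central elements $z_0,\tau(z_0)$), and it replaces the paper's long proof of \eqref{eqo8} by the two-line argument modeled on (2.4). What the paper's ordering buys is that \eqref{eqoo8} is obtained without first establishing the absorption identities \eqref{eqo9}--\eqref{eqo10}; your remark that \eqref{eqoo8} and \eqref{eqo11} ``resist every attempt'' to be derived from \eqref{eqo7} alone is therefore slightly too strong, but this is commentary rather than a gap in the proof.
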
\begin{proof} Equation (5.2): Let $f\neq 0$  be a non-zero solution of equation
 (\ref{eq11}). We will derive (5.2) by contradiction. Assume that $f(z_0)=0$. Putting  $y=z_0$ in
 equation (\ref{eq11})  we get
\begin{equation}\label{omar1}
 \mu(z_0)f(\tau(z_0)xz_{0})-f(xz_0z_{0}) =
 2f(x)f(z_0)=0\end{equation}
Replacing  $y$ by $yz_0$ in (\ref{eq11}) and  using (\ref{omar1})
and (\ref{eq11}) we get
 $$\mu(yz_0)f(\tau(y)xz_0\tau(z_{0}))-f(xyz_{0}z_0) = 2f(x)f(yz_{0})$$
 $$=\mu(y)f(\tau(y)xz_0z_{0})-f(xz_{0}yz_0) $$
 $$= 2f(y)f(xz_0).$$
 So, we deduce that $f(y) f(xz_{0})=f(x)
 f(yz_{0})$ for all $x,y\in S.$ Since $f\neq 0$, then
 there exists $\alpha\in \mathbb{C}$ such that
  $f(xz_{0})=\alpha
 f(x)$ for all $x\in S.$ Furthermore, $\alpha\neq0$, because if $\alpha=0$ we get $f(xz_0)=0$ for all $x\in S$
 and equation (\ref{eq11}) implies that $f=0$. This contradicts
 the assumption that $f\neq0$. \\Now,  by substituting  $f(xz_{0})=\alpha
 f(x)$ into (\ref{eq11}) we get
 \begin{equation}\label{elq1}
   \mu(y)f(\tau(y)x)-f(xy)=\frac{2}{\alpha}f(x)f(y)\; \text{for all }\;x,y \in S.
 \end{equation}
Switching $x$ and $y$ in (\ref{elq1}) we get
\begin{equation}\label{mohhh1}
- f(yx)+\mu(x)f(\tau(x)y)=\frac{2}{\alpha}f(x)f(y),\;  x,y \in S.
\end{equation} If we replace $y$ by $\tau(y)$ in (\ref{elq1}) and multiplying the result obtained by $\mu(y)$ we
get \begin{equation}\label{mohhh2}
 -\mu(y)f(x\tau(y))+f(yx)=\frac{2}{\alpha}f(x)\mu(y)f(\tau(y)),\;  x,y \in S.
\end{equation} By adding    (\ref{mohhh2}) and (\ref{mohhh1})  we obtain
\begin{equation}\label{mohhh3}
 -\mu(y)f(x\tau(y))+\mu(x)f(\tau(x)y)=\frac{2}{\alpha}f(x)[\mu(y)f(\tau(y))+f(y)],\;  x,y \in S.
\end{equation} By replacing $x$ by $\tau(x)$ in (\ref{mohhh3}) and multiplying the result obtained by $\mu(x)$ we
get \begin{equation}\label{mohhh4}
 f(xy)-\mu(xy)f(\tau(x)\tau(y))=\frac{2}{\alpha}\mu(x)f(\tau(x))[\mu(y)f(\tau(y))+f(y)].
\end{equation} By replacing $y$ by $\tau(y)$ in (\ref{mohhh3}) and multiplying the result obtained by $\mu(y)$ we
get \begin{equation}\label{mohhh5}
 \mu(xy)f(\tau(x)\tau(y))-f(xy)=\frac{2}{\alpha}f(x)[f(y)+\mu(y)f(\tau(y))].
\end{equation}By adding   (\ref{mohhh4}) and
(\ref{mohhh5})  we obtain
\begin{equation}\label{mohhh6}
 [f(x)+\mu(x)f(\tau(x))][\mu(y)f(\tau(y))+f(y)]=0,\;  x,y \in S.
\end{equation} So,  $\mu(x)f(\tau(x))=-f(x)$ for all $x\in S.$ Now, we will discuss
the following two  cases.\\ (1)  $\tau$ is an involutive
anti-automorphism.  By using $\mu(x)f(\tau(x))=-f(x)$ for all $x\in
S$ we get $f(\tau(y)x)=-\mu(\tau(y)x)f(\tau(x)y)$ for all $x,y\in
S$. Substituting this in equation
 (\ref{elq1}) we obtain
 \begin{equation}\label{elq2}
   f(xy)+\mu(x)f(\tau(x)y)=2\frac{-f(x)}{\alpha}f(y),\;x,y \in S.
 \end{equation}By replacing $x$ by $\tau(x)$ in (5.17) and multiplying the result obtained by $\mu(x)$ we deduce that  $f(x)=\mu(x)f(\tau(x))$
 for all $x\in S$. So, we have
 $f(x)=-\mu(x)f(\tau(x))=-f(x)$, which  implies that $f=0$. This contradicts the
assumption that $f\neq 0$.
\\(2)   $\tau$ is an involutive
automorphism. Then from $\mu(x)f(\tau(x))=-f(x)$ for all $x\in S$ we
get $f(\tau(y)x)=-\mu(\tau(y)x)f(y\tau(x))$ for all $x,y\in S$.
Substituting this in equation
 (\ref{elq1}) we obtain
 \begin{equation}\label{elq2}
   f(xy)+\mu(x)f(y\tau(x))=2\frac{-f(x)}{\alpha}f(y)\; \text{for all }\;x,y \in S.
 \end{equation}By replacing $x$ by  $\tau(x)$ in (\ref{elq2}) and multiplying the result obtained by $\mu(x)$ and using $\mu(x)f(\tau(x))=-f(x)$
 we get $$h(yx)+\mu(x)h(\tau(x)y)=2h(x)h(y)\; \text{for all }\;x,y \in S.$$
 where $h=\frac{f}{\alpha}$. So, from \cite{elqorachi} $\mu(x)f(\tau(x))=f(x)$ for all $x\in
 S$. Consequently, $\mu(x)f(\tau(x))=f(x)=-f(x)$ for all $x\in S$, which implies that $f=0$. This
 contradicts
  the assumption that $f\neq0$ and  this proves (5.2).\\
Equation (5.3): By replacing $y$ by $yz_0$ in (\ref{eq11}) we get
\begin{equation}\label{elq4}
\mu(yz_0)f(\tau(y)xz_{0}\tau(z_0))-f(xyz_{0}z_0)=2f(x)f(yz_0).\end{equation}
Replacing $x$ by $xz_{0}$ in (\ref{eq11})  we get
\begin{equation}\label{elq4}
\mu(y)f(\tau(y)xz_{0}z_0)-f(xyz_{0}z_0)=2f(y)f(xz_{0}).\end{equation}
Subtracting these equations results in
\begin{equation}\label{elq5}
\mu(yz_0)f(\tau(y)xz_{0}\tau(z_0))-\mu(y)f(\tau(y)xz_{0}z_0)
\end{equation}$$=2f(x)f(yz_0)-2f(y)f(xz_{0}).$$
On the other hand from (\ref{eq11}) we have
$$\mu(yz_0)f(\tau(y)xz_{0}\tau(z_0))-\mu(y)f(\tau(y)xz_{0}z_0)$$
$$=\mu(y)[\mu(z_0)f(\tau(z_0)\tau(y)xz_{0})-f(\tau(y)xz_0z_{0})]$$
$$=2\mu(y)f(z_0)f(\tau(y)x).$$
This implies that
\begin{equation}\label{elq6}
f(x)f(yz_0)-f(y)f(xz_{0})=\mu(y)f(\tau(y)x)f(z_0)\end{equation} for
all $x,y\in S.$ Since
$f(x)f(yz_0)-f(y)f(xz_{0})=-[f(y)f(xz_0)-f(x)f(yz_{0})]$, then we
deduce $\mu(y)f(\tau(y)x)f(z_0)=-\mu(x)f(\tau(x)y)f(z_0)$.
Now, by using (\ref{eqo8}) we deduce (\ref{eqoo8}).\\
Equation (5.7): By replacing $x$ by $x\tau(z_{0})$ in (\ref{eq11})
we get
\begin{equation}\label{elq7}
 \mu(y) f(\tau(y)x\tau(z_{0})z_0)-f(xy\tau(z_{0})z_0)\end{equation}$$=2f(y)f(x\tau(z_{0})).
$$ From (\ref{eqoo8}) we have
$\mu(\tau(x))f(xy\tau(z_{0})z_0)=\mu(\tau(x))f(\tau(\tau(x))(y\tau(z_{0})z_0))=-\mu(y)f(\tau(y)\tau(x)\tau(z_0)z_{0})$
and then  equation (\ref{elq7}) can be written as follows
\begin{equation}\label{elq8}
  f(\tau(y)x\tau(z_{0})z_0)+\mu(x)f(\tau(y)\tau(x)\tau(z_{0})z_0)\end{equation}
  $$=2f(y)\mu(\tau(y))f(x\tau(z_{0})).
$$ By replacing $x$ by $\tau(x)$ in (\ref{elq8}) and multiplying the result obtained by $\mu(x)$  and using  $f\neq0$ we get
 (\ref{elq10}).
\\From equations (\ref{eqoo8}) and (\ref{elq10})
we have
 $$\mu(\tau(x))f(xz_{0})=-\mu(z_0)f(\tau(z_{0})\tau(x))$$
 $$=-\mu(z_0)\mu(\tau(x))f(x\tau(z_{0})) =f(\tau(x)z_{0}).$$
 This proves (\ref{eqo11}).
 \\Equation (5.1): Replacing $x$ by $\tau(x)$ in (\ref{eq11}) we
 get
\begin{equation}\label{omar10}
 \mu(y)f(\tau(y)\tau(x)z_0)-f(\tau(x)yz_0)
 =2f(\tau(x))f(y),\;x,y\in S.\end{equation}
  We will discuss the following two possibilities.\\(1)  $\tau$ is an involutive automorphism.
  From (\ref{eqo11}) we have $$f(\tau(y)\tau(x)z_0)= f(\tau(yx)z_0)=\mu(\tau(yx))f(yxz_0)$$
   and in view of (\ref{omar10}) we obtain
 $$\mu(\tau(x))f(yxz_0)-f(\tau(x)yz_0)=2f(\tau(x))f(y),\,x,y\in S.$$
 Since
 $$\mu(\tau(x))f(yxz_0)-f(\tau(x)yz_0)=-\mu(\tau(x))[\mu(x)f(\tau(x)yz_0)-f(yxz_0)]=-\mu(\tau(x))2f(y)f(x),$$
  then we deduce that $$-2\mu(\tau(x))f(x)f(y)=2f(\tau(x))f(y)$$ for all
 $x,y\in S$. Since $f\neq0$ then we have (\ref{eqo7}).\\ (2)   $\tau$ is an involutive anti-automorphism.  Using (\ref{eqo11}) we have
 $$f(\tau(y)\tau(x)z_0)=
  f(\tau(xy)z_0)=\mu(\tau(yx))f(xyz_0)$$ and
  $f(\tau(x)yz_0)=\mu(\tau(x)y)f(\tau(y)xz_0)$.
  Now, equation (\ref{omar10}) can be written as follows
  $$\mu(\tau(x))f(xyz_0)-\mu(\tau(x)y)f(\tau(y)xz_0)=2f(\tau(x))f(y)=-\mu(\tau(x))[\mu(y)f(\tau(y)xz_0)-f(xyz_0)]$$$$
  =-\mu(\tau(x))2f(x)f(y).$$ Since $f\neq0$ then we obtain again
  (\ref{eqo7}).
 \\Equation (5.4): Putting $x=\tau(z_{0})$ in (\ref{eq11}), using
(\ref{eqo7}) we get
  $$\mu(y)f(\tau(y)\tau(z_0)z_0)-f(\tau(z_{0})yz_0) =2f(y)f(\tau(z_{0}))$$$$=-2f(y)\mu(\tau(z_0))f(z_{0}).$$
  Since
 $$\mu(y)f(\tau(y)\tau(z_{0})z_0)=-\mu(\tau(z_0)z_0)f(yz_{0}\tau(z_0))=-f(yz_{0}\tau(z_0))$$ then we
 obtain
 $$f(\tau(z_{0})yz_0)=\mu(\tau(z_0))f(y)f(z_{0})$$  for all $y\in S$. We see that we deal with (5.4).\\Equation (5.5):
  Replacing $y$ by $z_{0}$ in (\ref{eq11}) and using (\ref{eqo9}) we
 get $$\mu(z_0)f(\tau(z_{0})xz_0)-f(xz_{0}z_0)$$$$
 =2f(x)f(z_{0})=f(x)f(z_{0})-f(xz_{0}z_0).$$ Which proves (5.5).
 Equation (5.8):
  By replacing $x$ by $z_{0}$ in (\ref{eqo11})  we get
  $ \mu(z_{0})f(\tau(z_{0})z_0)=
  f(z_{0}^{2}).$ From (5.1) we have
  $f(\tau(z_{0})z_0)=-f(\tau(z_0)z_{0})$ , then we conclude that  $$ f(\tau(z_{0})z_0)=
  f(z_{0}^{2})=0,$$
 which  proves  (\ref{omar100}). This completes the proof.\end{proof}
\begin{lem} Let $f$: $S\longrightarrow \mathbb{C}$ be a non-zero solution of equation (\ref{eq11}).
 Then\\
 (1) The function defined by $$g(x)\;:=\frac{f(xz_0)}{f(z_0)}\;
\text{for} \;x\in S$$ is a non-zero  solution of the variant of
d'Alembert's functional equation (1.15). \\ (2) The function $g$
from (1) has the form $g=\frac{\chi+\mu\chi\circ \tau}{2}$, where
$\chi$ : $S\longrightarrow \mathbb{C}$, $\chi\neq 0$, is a
multiplicative function.\end{lem}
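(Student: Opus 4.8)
The plan is to establish the two parts in turn: first verify that $g$ is a non-zero solution of the variant equation (\ref{om2}), then pin down its form by forcing $g$ to be abelian.

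\emph{Part (1).} Since $f\neq0$, formula (\ref{eqo8}) gives $f(z_0)\neq0$, so $g(x)=f(xz_0)/f(z_0)$ is well defined. To verify (\ref{om2}) I would show that $f(z_0)^2\bigl[g(xy)+\mu(y)g(\tau(y)x)-2g(x)g(y)\bigr]=0$ for all $x,y\in S$. The only non-routine input is the evaluation of $2f(xz_0)f(yz_0)$: replacing $x$ by $xz_0$ and $y$ by $yz_0$ in (\ref{eq11}) and rewriting the argument $\tau(yz_0)(xz_0)z_0$ as $(\tau(y)xz_0)\tau(z_0)z_0$ (legitimate because $z_0$ and $\tau(z_0)$ are central), I can apply (\ref{eqo9}) to the first term and (\ref{eqo10}) to the second; together with $\mu(z_0\tau(z_0))=1$ this collapses $2f(xz_0)f(yz_0)$ to $f(z_0)\bigl[f(xyz_0)+\mu(y)f(\tau(y)xz_0)\bigr]$, which is exactly what is needed. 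Dividing by $f(z_0)^2$ yields (\ref{om2}). Finally $g\neq0$, for if $g\equiv0$ then $f(\cdot\,z_0)\equiv0$, and (\ref{eq11}) would force $f\equiv0$, contrary to hypothesis. (Note that here $g(z_0)=f(z_0^2)/f(z_0)=0$ by (\ref{omar100}), so $g$ falls outside the class $\mathcal{M}$ of Lemma 3.2 and the bijection argument of Section 3 is unavailable.)

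\emph{Part (2).} The strategy is to produce a sine addition law for the pair $(f,g)$, deduce that $g$ is abelian, and then reduce the variant equation to the ordinary $\mu$-d'Alembert equation (\ref{eq12}). Substituting $x\mapsto xz_0$ in (\ref{eq11}) and using (\ref{eqo10}) gives $f(xy)-\mu(y)f(\tau(y)x)=2g(x)f(y)$, while substituting $y\mapsto yz_0$ and using (\ref{eqo9}), (\ref{eqo10}) together with $\mu(z_0\tau(z_0))=1$ gives $f(xy)+\mu(y)f(\tau(y)x)=2f(x)g(y)$. Adding these two identities yields the sine addition law $f(xy)=f(x)g(y)+g(x)f(y)$ for all $x,y\in S$.

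Because $f\neq0$, the results [\cite{ebanks}, Lemma 3.4] and [\cite{07}, Theorem 4.1] then show that $g$ is abelian, hence central. Centrality gives $g(\tau(y)x)=g(x\tau(y))$, so the variant equation (\ref{om2}) satisfied by $g$ becomes the $\mu$-d'Alembert equation (\ref{eq12}); as $g$ is a non-zero abelian solution of (\ref{eq12}), [\cite{belfkih}, Theorem 2.1] furnishes a non-zero multiplicative $\chi\colon S\to\mathbb{C}$ with $g=\frac{\chi+\mu\chi\circ\tau}{2}$, as claimed. (When $\tau$ is an automorphism one may instead quote [\cite{elqorachi}, Lemma 3.2] directly and skip the abelianness step.)

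The main obstacle is getting the $\chi$-form \emph{uniformly} for both an automorphism and an anti-automorphism $\tau$: unlike Lemma 3.2, where the anti-automorphism case only yields that $g$ solves d'Alembert's equation, here the sine addition law is the device that guarantees abelianness of $g$ in both cases, exactly as in the proof of Theorem 4.2. Once abelianness is secured, the reduction to (\ref{eq12}) and the appeal to [\cite{belfkih}, Theorem 2.1] are routine; care is needed only in keeping track of the central factors $z_0,\tau(z_0)$ and the normalization $\mu(z_0\tau(z_0))=1$ when applying (\ref{eqo9}) and (\ref{eqo10}).
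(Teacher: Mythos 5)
Your proposal is correct and follows essentially the same route as the paper: the same substitutions $x\mapsto xz_0$, $y\mapsto yz_0$ combined with (\ref{eqo9}) and (\ref{eqo10}) to verify the variant d'Alembert equation, and the same derivation of the sine addition law $f(xy)=f(x)g(y)+f(y)g(x)$ to force $g$ abelian before invoking [\cite{ebanks}, Lemma 3.4] and [\cite{belfkih}, Theorem 2.1]. The only cosmetic differences are that the paper establishes $g\neq0$ by computing $g(z_0^2)=-f(z_0)\neq0$ rather than by your (equally valid) contrapositive argument, and your explicit reduction of the variant equation to (\ref{eq12}) via centrality is a slightly more careful reading of the final citation than the paper's own wording.
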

\begin{proof}
 (1)  From (\ref{eqo9}), (\ref{eqo10}), (\ref{eq11}) and the definition of $g$ we have $$(f(z_0))^{2}[g(xy)+\mu(y)g(\tau(y)x)]=
 f(z_0) \mu(y)f(\tau(y)xz_0)+f(z_0) f(xyz_{0})$$
 $$=\mu(y)\mu(z_0)f(\tau(y)xz_{0}\tau(z_0)z_0)-f(xyz_{0}z_0^{2})$$$$=
  \mu(yz_0)f(\tau(yz_0)(xz_{0})z_0)-f((xz_{0})(yz_0)z_0)$$
  $$=2f(xz_{0})f(yz_0).$$
 Dividing by $(f(z_0))^{2}$ we get $g$ satisfies the variant of d'Alembert's functional equation (1.15).
 In view of (\ref{eqo10}) and the definition of $g$ we get  $$g(z_0^{2})=\frac{f(z_0z_0^{2})}{f(z_{0})}$$
 $$=\frac{-f(z_0)f(z_{0})}{f(z_{0})}=-f(z_{0})\neq 0.$$ Then  $g$ is non-zero solution
of equation (1.15).  \\(2)  By replacing $x$ by $xz_{0}$ in
(\ref{eq11})  we get
\begin{equation}\label{haj1}
\mu(y)f(\tau(y)xz_{0}^{2})-f(xyz_{0}^{2})=2f(y)f(xz_{0}).\end{equation}
By using (\ref{eqo10})  equation (\ref{haj1}) can be written as
follows
\begin{equation}\label{haj2}
-\mu(y)f(\tau(y)x)+f(xy)=2f(y)g(x),\;x,y\in S,\end{equation} where
$g$ is the function defined above. If we replace $y$ by $yz_{0}$ in
(\ref{eq11}) we get
\begin{equation}\label{haj3}
\mu(yz_0)f(\tau(y)x\tau(z_{0})z_0)-f(xyz_{0}z_0)=2f(x)f(yz_{0}).\end{equation}
By using (\ref{eqo9}), (\ref{eqo10}) we obtain
\begin{equation}\label{haj4}
\mu(y)f(\tau(y)x)+f(xy)=2f(x)g(y),\;x,y\in S.\end{equation} By
adding (\ref{haj4}) and  (\ref{haj2}) we get that the pair $f,g$
satisfies the sine addition law $$f(xy)=f(x)g(y)+f(y)g(x)\;
\text{for all }\; x,y\in S.$$ Now, in view of [\cite{ebanks}, Lemma
3.4.] $g$ is abelian. Since $g$ is a non-zero solution of
d'Alembert's functional equation (1.15) then  from [\cite{belfkih},
Theorem 2.1 ] there exists a non-zero multiplicative function
$\chi$: $S\longrightarrow \mathbb{C}$ such that
$g=\frac{\chi+\mu\chi\circ\tau}{2}$. This completes the proof.
\end{proof}
 The following theorem is the main result of this section.
 \begin{thm} The non-zero solutions $f$ : $S\longrightarrow \mathbb{C}$ of
the functional equation (\ref{eq11}) are the functions of the form
\begin{equation}\label{elq300}
    f=\frac{\mu\chi\circ\tau-\chi}{2}\chi(z_0),
\end{equation} where $\chi$ : $S\longrightarrow \mathbb{C}$ is a
multiplicative function such that $\chi(z_0)\neq 0$ and
$\mu(z_0)\chi(\tau(z_0))=-\chi(z_0)$.
\\If $S$ is a
topological semigroup and that $\tau$ : $S\longrightarrow S$ and
$\mu$: $S\longrightarrow \mathbb{C}$ are continuous then the
non-zero solution $f$ of equation (\ref{eq11}) is continuous, if and
only if $\chi$ is continuous.
\end{thm}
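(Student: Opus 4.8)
The plan is to treat the two implications separately, leaning heavily on Lemma 5.3. For the sufficiency, that every $f$ of the displayed form solves (\ref{eq11}), I would simply substitute $f=\frac{\mu\chi\circ\tau-\chi}{2}\chi(z_0)$ into (\ref{eq11}), expand using multiplicativity of $\chi$ and $\mu$, and simplify with the constraint $\mu(z_0)\chi(\tau(z_0))=-\chi(z_0)$ together with $\mu(x\tau(x))=1$; this is a routine computation that I would omit. The nontrivial content is the necessity, and here the key point is that Lemma 5.3 already carries the analytic burden: for a non-zero solution $f$ it produces a non-zero multiplicative $\chi$ with $g=\frac{\chi+\mu\chi\circ\tau}{2}$, where $g(x):=f(xz_0)/f(z_0)$. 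Thus for the theorem itself only an algebraic identification of $f$ in terms of $\chi$ remains.

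The crucial step I would isolate is the identity $f(x)=-g(xz_0)$ for all $x\in S$. This follows at once from the definition of $g$ and relation (\ref{eqo10}): indeed $g(xz_0)=f(xz_0^{2})/f(z_0)=-f(z_0)f(x)/f(z_0)=-f(x)$. Once this is in hand the formula for $f$ drops out. First I would record that $g(z_0)=f(z_0^{2})/f(z_0)=0$ by (\ref{omar100}) and (\ref{eqo8}); since $g(z_0)=\frac{\chi(z_0)+\mu(z_0)\chi(\tau(z_0))}{2}$, this yields exactly the required condition $\mu(z_0)\chi(\tau(z_0))=-\chi(z_0)$. Then, expanding $g(xz_0)=\frac{\chi(x)\chi(z_0)+\mu(x)\mu(z_0)\chi(\tau(x))\chi(\tau(z_0))}{2}$ and inserting this condition gives $g(xz_0)=\frac{\chi(z_0)}{2}\big(\chi(x)-\mu(x)\chi(\tau(x))\big)$, so that $f(x)=-g(xz_0)=\frac{\chi(z_0)}{2}\big(\mu(x)\chi(\tau(x))-\chi(x)\big)$, which is the asserted form. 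Finally $\chi(z_0)\neq0$ must hold, for otherwise the formula forces $f\equiv0$, contradicting $f\neq0$.

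For the continuity statement one direction is immediate: if $\chi$ is continuous then so is $f=\frac{\mu\chi\circ\tau-\chi}{2}\chi(z_0)$, since $\mu$ and $\tau$ are assumed continuous. For the converse I would recover $\chi$ from $f$ and $g$ through the linear relation $\chi=g-\frac{1}{\chi(z_0)}f$, which follows from $g=\frac{\chi+\mu\chi\circ\tau}{2}$ and $\frac{2}{\chi(z_0)}f=\mu\chi\circ\tau-\chi$. Continuity of $f$ gives continuity of $g$ via $g(x)=f(xz_0)/f(z_0)$ (right translation by the fixed $z_0$ being continuous), and hence continuity of $\chi$.

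I do not expect a serious obstacle in Theorem 5.4 itself, precisely because Lemma 5.3 has already handled the genuinely hard part, namely showing via the sine addition law $f(xy)=f(x)g(y)+f(y)g(x)$ that $g$ is abelian and therefore of the form $\frac{\chi+\mu\chi\circ\tau}{2}$. The only point requiring care is the correct bookkeeping of the scalar $\chi(z_0)$ and of the sign in the constraint $\mu(z_0)\chi(\tau(z_0))=-\chi(z_0)$; getting these right is what distinguishes equation (\ref{eq11}), with its minus sign, from the Kannappan-type equations of Sections 2 and 3.
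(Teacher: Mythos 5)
Your proof is correct, and it reaches the formula by a slightly different algebraic path than the paper, although both arguments rest on the same key lemma (the one giving $g=\frac{\chi+\mu\chi\circ\tau}{2}$ via the sine addition law). The paper expresses $f$ in terms of $g$ by setting $y=z_0$ in (\ref{eq11}), obtaining $f=\frac{1}{2}\left(\mu(z_0)g(\tau(z_0)x)-g(xz_0)\right)=\frac{\chi(z_0)-\mu(z_0)\chi(\tau(z_0))}{2}\cdot\frac{\mu\chi\circ\tau-\chi}{2}$, and then extracts the constraint $\mu(z_0)\chi(\tau(z_0))=-\chi(z_0)$ by feeding this expression back into the symmetry relation (\ref{eqo11}), which requires the side remark that $\chi\neq\mu\chi\circ\tau$. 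You instead use the single identity $f(x)=-g(xz_0)$, which follows immediately from (\ref{eqo10}), and you obtain the constraint directly from $g(z_0)=0$, itself a consequence of (\ref{omar100}) and (\ref{eqo8}); this bypasses both the $y=z_0$ substitution and the appeal to (\ref{eqo11}), and it avoids the case discussion around $\chi=\mu\chi\circ\tau$ entirely, since the formula forces $f=0$ in that case anyway. Your derivation is, if anything, a little tighter. For the continuity claim the paper simply cites an external result, whereas you recover $\chi$ explicitly as $\chi=g-\frac{1}{\chi(z_0)}f$ with $g(x)=f(xz_0)/f(z_0)$ continuous whenever $f$ is; this is a self-contained and perfectly valid replacement. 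The only bookkeeping point worth making explicit in a final write-up is that $\chi(\tau(xz_0))=\chi(\tau(x))\chi(\tau(z_0))$ holds whether $\tau$ is an automorphism or an anti-automorphism, because $\chi$ takes values in the commutative ring $\mathbb{C}$; you use this silently when expanding $g(xz_0)$.
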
\begin{proof} Simple computations show that $f$ defined by (\ref{elq300}) is a solution of (\ref{eq11}). Conversely,
 let $f$ : $S\longrightarrow \mathbb{C}$ be a
non-zero solution of the functional equation (\ref{eq11}). By
putting $y=z_0$ in (\ref{eq11}) we get
\begin{equation}\label{elqorachi1222}
f(x)=\frac{\mu(z_0)f(\tau(z_0)xz_0)-f(xz_0z_0)}{2f(z_0)}\end{equation}
$$=\frac{1}{2}( \mu(z_0)g(\tau(z_0)x)-g(xz_0)),$$
 where g is the function defined  by  $g(x)=\frac{f(xz_0)}{f(z_0)}$ and that from Lemma 5.2  has the form  $g=\frac{\chi+\mu\chi\circ \tau}{2}$,
 where $\chi$ : $S\longrightarrow \mathbb{C}$, $\chi\neq 0$ is a
multiplicative function. Substituting this into
(\ref{elqorachi1222}) we find that $f$ has the form
\begin{equation}\label{equo1àà}
    f=\frac{\chi(z_0)-\mu(z_0)\chi(\tau(z_0))}{2}\frac{ \mu\chi\circ\tau-\chi}{2}.
\end{equation}Furthermore, from (\ref{eqo11})  $f$ satisfies
$\mu(x) f(\tau(x)z_0)=f(xz_0)$ for all $x\in S$. By applying the
last expression of $f$ in  (\ref{eqo11}) we get after computations
that
$$[\mu(z_0)\chi(\tau(z_0))+\chi(z_0)][\chi-\mu\chi\circ\tau]=0.$$ Since $\chi\neq  \mu \chi\circ\tau$, we
obtain
 $\mu(z_0)\chi(\tau(z_0))+\chi(z_0)=0$ and  then from (\ref{equo1àà})
 we have
$$f=\frac{ \mu\chi\circ\tau-\chi}{2}\chi(z_0).$$
For the topological statement we use [\cite{07}, Theorem 3.18(d)].
This completes the proof. \end{proof}

\end{document}